\DeclareMathAlphabet{\mathpzc}{OT1}{pzc}{m}{it}
\newtheorem{thm}{Theorem}[section]
\newtheorem{lem}[thm]{Lemma}
\newtheorem{prop}[thm]{Proposition}
\theoremstyle{remark}
\newtheorem{rem}[thm]{Remark}
\theoremstyle{definition}
\newtheoremstyle{Claim}{}{}{\itshape}{}{\itshape\bfseries}{:}{ }{#1}
\theoremstyle{Claim}
\newcommand{\T}{{\mathbb{T}^N}}
\newcommand{\R}{\mathbb{R}}
\newcommand{\RsetN}{\mathbb{R}^N}
\newcommand{\Fc}{\mathbcal{F}}
\newcommand{\Gc}{\mathbcal{G}}
\newcommand{\Lc}{\mathbcal{L}}
\newcommand{\su}{\mathpzc{u}}
\newcommand{\sm}{\mathpzc{m}}
\newcommand{\eps}{\varepsilon}
\DeclareMathOperator{\diverg}{div}
\title{On the existence of oscillating solutions \\ in non-monotone Mean-Field Games}
\author{Marco Cirant}
\date{\today}
\begin{document}

\maketitle

\begin{abstract} For non-monotone single and two-populations time-dependent Mean-Field Game systems we obtain the existence of an infinite number of branches of non-trivial solutions. These non-trivial solutions are in particular shown to exhibit an oscillatory behaviour when they are close to the trivial (constant) one. The existence of such branches is derived using local and global bifurcation methods, that rely on the analysis of eigenfunction expansions of solutions to the associated linearized problem. Numerical analysis is performed on two different models to observe the oscillatory behaviour of solutions predicted by bifurcation theory, and to study further properties of branches far away from bifurcation points. 
\end{abstract}

\noindent
{\footnotesize \textbf{AMS-Subject Classification}}. {\footnotesize 35K55, 35B32, 35B36, 49N70.}\\
{\footnotesize \textbf{Keywords}}. {\footnotesize Multi-population Mean-Field Games, Bifurcation, Instability.}

\section{Introduction}

We consider a system of partial differential equations arising in finite-horizon Mean-Field Games (briefly, MFG) with two populations of agents of the form
\begin{equation}\label{MFG0}
\begin{cases}
-\partial_t u_i - \sigma \Delta u_i + \frac{1}{2}|\nabla u_i|^2 = V_i(m_1, m_2), \\
\partial_t m_i - \sigma \Delta m_i - \diverg(\nabla u_i\, m_i)  = 0, & \text{in $Q_T = \Omega \times (0,T), i =1, 2,$}
\end{cases}
\end{equation}
endowed with Neumann boundary data
\[
\partial_\nu u_i = \partial_\nu m_i = 0 \quad \text{on $\partial \Omega \times (0,T), i =1, 2,$} 
\]
and initial-final conditions
\begin{equation}\label{ifcond}
m_i(x,0) \equiv |\Omega|^{-1}, \quad u_i(x,T) \equiv 0 \quad \text{on $\Omega, i =1, 2$}.
\end{equation}
Here, $\sigma > 0$, $\Omega$ is a smooth bounded domain of $\RsetN$, and the parameter $T > 0$ is the horizon of the game; the unknown $m(t) = (m_1(t), m_2(t))$ is a vector of probability densities on $\Omega$ and represents the evolution of the distributions of typical agents, $V_i$ is the function associated to their running costs, and $u$ is the vector of their value functions. MFG systems of PDEs have been introduced in the pioneering works \cite{LL062, LL07} to describe Nash equilibria of games with an infinite number of identical agents; we refer to \cite{BenFr, CardaNotes, GomesBook, HCM06, Lcol} and references therein for additional details on the theory of MFG. If players belong to two or more different groups (populations), then one is naturally led to consider MFG systems of the form \eqref{MFG0}. The systematic study of this setting started with the early works \cite{C14, feleqi, LachapelleWolfram}, see also \cite{Lauriere, Carda17, GoPa}.

Up to now, uniqueness of solutions to \eqref{MFG0} is known in two different regimes. The first one has been discussed in \cite{C14}; it is a straightforward generalization of the monotonicity condition by Lasry-Lions \cite{LL07}, and reads
\begin{equation}\label{uniquecond}
 \sum_{i=1, 2} \int_\Omega \big(V_i(m(x))-V_i (\bar m(x))\big) (m_i(x) - \bar{m}_i(x) )dx \ge 0 \quad \forall m, \bar{m} \in C(\T).
\end{equation}
Heuristically, this condition imposes aversion to crowd within each population, and this effect to be dominant with respect to effects due to interactions between different populations. Another uniqueness regime was discussed in \cite{Lcol}, and has been recently revived in \cite{BCir, BarFis, CGM}: it occurs under the ``smallness'' of some data. A typical example of this case is that the time horizon $T$ be small enough. Roughly speaking, if the horizon is short agents do not have enough time to reach an equilibrium that is far from their initial state. Both regimes turn out to be quite special in many applications. First, the main cause of dynamics in multi-population models is indeed interaction between populations; secondly, many interesting phenomena are unveiled if the system has enough time to evolve, showing in the long run its own typical features.

In this paper, we consider \eqref{MFG0} in a ``non-monotone'' case, namely without \eqref{uniquecond} in force. The purpose of this work is to prove first that without \eqref{uniquecond}, the MFG system \eqref{MFG0} admits in general multiple solutions. Secondly, that for time horizons $T^\eps$, which can be arbitrarily large, there are families of (small amplitude) solutions $(u_i^\eps, m_i^\eps)$ that exhibit an {\it oscillatory} behaviour in time, namely such that $m^\eps_i(x,t) = 1 + \eps \psi(x) \sin(2 \pi t/\tau) + o(\eps)$ as $\eps \to 0$, where $\tau$ is an ``intrinsic'' period that depends on $V_i, \Omega, \sigma$ and not on $T$. Finally, some numerical analysis is performed to show that at least in some examples, a ``periodic'' structure in time survives beyond the perturbative regime $\eps \to 0$.

If $V_i \in C^1$, a simple Taylor expansion shows that  \eqref{uniquecond} holds if the symmetrization of the Jacobian matrix $JV(\cdot)$ is everywhere positive semi-definite. Here, we will suppose instead that
\begin{equation}\label{Vass}
\begin{split}
\bullet &\, \text{$V_i \in C^\infty((0,+\infty) \times (0,+\infty))$ are bounded on $\R^2$}, \\
\bullet &\, JV := JV(1,1) = 
\begin{bmatrix}
 \partial_{m_1} V_1(1, 1) &  \partial_{m_2} V_1(1, 1) \\
  \partial_{m_1} V_2(1, 1) &  \partial_{m_2} V_2(1, 1)
\end{bmatrix} \\
& \text{has real eigenvalues $(a_1, a_2)$ with opposite signs, i.e.  $a_1 < 0 \le a_2$.}
\end{split}
\end{equation}
Moreover, we will not make any assumption on the smallness of $T$.

We will focus on two different models that fall into \eqref{Vass}. To describe them (and our assumption  \eqref{Vass} itself), let us consider the simple situation
\begin{equation}\label{Vex}
\begin{cases}
V_1(m_1, m_2) = \alpha_1 g(m_1) + \gamma g(m_2), \\
V_2(m_1, m_2) = \gamma g(m_1) + \alpha_2 g(m_2),
\end{cases}
\end{equation}
where $\alpha_1, \alpha_2, \gamma \in \R$ and $g(\cdot)$ is a smooth, bounded and increasing function such that $g'(1) = 1$. Then,
\[
JV = \begin{bmatrix}
\alpha_1 &  \gamma \\
\gamma &  \alpha_2
\end{bmatrix},
\]
and \eqref{Vass} basically holds whenever $\alpha_1 \alpha_2 \le \gamma^2$. Since the coefficients $\alpha_i$ determine self-interaction costs within each population, and $\gamma$ is related to the cost of interaction between players of different populations, \eqref{Vass} somehow requires that the latter cost has a stronger effect than the former. Anyhow, \eqref{Vass} also includes a ``decoupled'' case, that occurs in the example \eqref{Vex} when $\gamma = 0$, namely when the two populations do not interact within each other. Here, one should impose $\alpha_1 < 0$ and $\alpha_2 \ge 0$. This means that the first population aims at aggregation (the cost of a typical player is decreasing with respect to the density $m_1$), while players of the other one prefer to avoid crowded regions. System \eqref{MFG0} then consists of two decoupled MFG systems for $(u_1, m_1)$ and $(u_2, m_2)$; the latter enjoys uniqueness of solutions, while the former is so called ``non-monotone'' or ``focusing''. The ``non-monotone'' setting has been recently considered in \cite{CCes, cir16, CTon, Go16}, and exhibits a much more complicated behaviour than the monotone one (existence of several solutions, concentration, ...). Since the study of the non-monotone case is still at an early stage, we believe that an analysis of the decoupled system can be of interest on its own. We will consider this single-population, non-monotone problem in Section \ref{spa}.

We will also focus on a truly two-population model, inspired by the work of T. Schelling on urban settlements \cite{ABC}, and somehow resembling \eqref{Vex} with $\alpha_1 \alpha_2 \le \gamma^2$ and $\alpha_i, \gamma > 0$, that is when the two populations tend to avoid each other. In \cite{ABC}, some numerical experiments indicated the possible existence of solutions that are unstable in time, namely oscillating between different states; that numerical insight has been one of the main motivations of the present work, and is carried over here in Section \ref{spb}. 

Let us discuss the choice of the initial-final data \eqref{ifcond}, that will be fixed throughout the paper. Assuming without loss of generality that $|\Omega|=1$, we have \[
m_i(\cdot,0) \equiv 1 \qquad \text{on $\Omega$}.
\]
Then, for all $T > 0$, \eqref{MFG0} has the {\it trivial} solution $(\bar u, \bar m) = ((T-t)V_1(1,1), (T-t)V_2(1,1), 1, 1)$. In other words, the constant state is a stable in time equilibrium for every choice of $T$. Here, we ask whether or not there exist equilibria where the populations move away from the constant state as $T$ varies. More precisely, denoting by $S$ the closure of the set of non-trivial (classical) solutions to \eqref{MFG0}, namely
\[
S := {\rm cl}\{(u, m, T) \in [C^{4+\alpha, 2+\alpha/2}({Q_T})]^4 \times (0,\infty) \, : \, \text{$(u, m)$ is a solution to \eqref{MFG0} and  $(u, m) \neq (\bar u, \bar m)$}\},
\]
we aim at giving a description of the set $S$ (here, $\alpha \in (0, 1)$ is fixed, and $C^{4+\alpha, 2+\alpha/2}(Q_T)$ denotes the standard H\"older parabolic space, see Section \ref{s_np}). Since for all $T$, \eqref{MFG0} has an explicit solution which is the trivial one, it is natural to treat $T$ as a parameter, and to fit the problem into a bifurcation framework, namely to search for (connected) subsets of $S$ containing $(\bar u, \bar m, T^*)$ for some $T^*$; such a $T^*$ will be called a bifurcation point. Based on topological methods, we will implement here the classical tools of local and global bifurcation (see, e.g., \cite{Kiel}). To do so, one has first to consider solutions of \eqref{MFG0} as fixed points of a non-linear operator $G(x, m)$. Then, for $T^*$ to be a bifurcation point, it is necessary that the linearized operator around the trivial solution has a non-trivial kernel. This property becomes sufficient if some additional ``transversality'' condition involving derivatives of $G$ holds. The study of the linearized operator will be carried out analyzing expansions of the variables that are based on eigenfunctions of the (Neumann) Laplacian. Global bifurcation methods will then lead to the existence of continua $C \subset S$, or {\it branches}, of non-trivial solutions emanating from critical bifurcation times $T^*$. Finally, local bifurcation results will give an explicit parametrization of $C_n$ close to $T^*$, that will provide a rather precise qualitative description of solutions. We point out that bifurcation techniques have been used extensively to study several nonlinear scalar PDEs and systems of PDEs, but their implementation in MFG is rather new; as far as we know, they have been used only in \cite{CV} to study stationary problems.

Before stating the main result of this paper, let us denote by  $(\lambda_k)_{k \ge 0}$ the non-decreasing sequence  of eigenvalues
of $-\Delta$ with homogeneous Neumann boundary conditions, namely $\lambda_k$ be such that
\begin{equation}\label{eigen_sys}
\begin{cases}
- \Delta \psi_k = \lambda_k \psi_k & \text{in $\Omega$}, \\
\partial_n \psi_k = 0 & \text{on $\partial \Omega$},
\end{cases}
\end{equation}
for some eigenvector $\psi_k \in C^{\infty}(\overline{\Omega})$; let $(\psi_k)_{k \ge 0}$ be renormalized such that it constitutes an orthonormal basis of $L^2(\Omega)$. Note that the first eigenvalue $\lambda_0$ is zero, with associated constant eigenfunction $\psi_0$. 

Denote also by $\Xi$ a $2\times2$ invertible square matrix whose columns are the eigenvectors associated to $a_1$ and $a_2$, so that 
\begin{equation}\label{diagonal}
\begin{bmatrix}
a_1 &  0 \\
  0 &  a_2
\end{bmatrix}
= \Xi \cdot JV \cdot \Xi^{-1}.
\end{equation}

We have the following existence result.

\begin{thm}\label{glob_bifo_mfg} Suppose that \eqref{Vass} holds, that $\lambda_1$ is a simple eigenvalue of \eqref{eigen_sys}, $\sigma^2 \lambda_1 < -a_1 < \sigma^2 \lambda_2$, and that $T^*_n > 0$ satisfies
\begin{equation}\label{TC}
T^*_n = \frac{1}{\sqrt{\lambda_{1}(-a_1 - \sigma^2 \lambda_{1})}}\left[n\pi - \arctan\left(\frac{1}{\sigma}\sqrt{\frac{-a_1- \sigma^2 \lambda_{1}}{\lambda_{1}}}\right)\right]
\end{equation}
for some $n \in \mathbb{N}$. Then, $(\bar u, \bar m, T^*_n) \in S$. Let $C_n$ be the connected component of $S$ to which $(\bar u, \bar m,T^*_n)$ belongs. Then
\begin{itemize}
\item[{\it i)}] $C_n$ contains some $(\bar u, \bar m, \widetilde T)$, where $T^*_n \neq \widetilde T$, or
\item[{\it ii)}] $C_n$ is unbounded.
\end{itemize}

Finally, $C_n$ is a continuously differentiable curve in a neighbourhood of $T^*_n$, parametrized by
\begin{equation}\label{expa}
T = T^*_n + O(\eps), \quad (m_1(x,s), m_2(x,s)) = (1,1) + \eps \psi_{1}(x) \Xi \cdot \left[\sin \left(\sqrt{\lambda_{1}(-a_1 - \sigma^2 \lambda_{1})}\, s\right) , 0\right]^T + o(\eps).
\end{equation}
\end{thm}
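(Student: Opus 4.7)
The plan is to reformulate \eqref{MFG0}--\eqref{ifcond} as the zero set of a nonlinear operator in a suitable Banach space and apply the local (Crandall--Rabinowitz) and global (Rabinowitz) bifurcation theorems at the critical values $T=T^*_n$. Since the time domain $Q_T$ depends on the bifurcation parameter, I first rescale time via $t=Ts$ with $s\in[0,1]$, so that solutions live on a fixed cylinder and $T$ only enters as a coefficient. On $X=[C^{4+\alpha,2+\alpha/2}(\Omega\times[0,1])]^4$ I write the rescaled system in the form $F(w,T)=0$, or equivalently $w=\Kcal(w,T)$ with $w=(u,m)-(\bar u,\bar m)$ and $\Kcal$ a compact $C^1$ map; this relies on standard parabolic Schauder theory together with the fact that the quadratic and transport nonlinearities are lower-order perturbations of the heat operator.

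Next I analyse the linearization $L(T)=D_wF(0,T)$. A direct expansion of \eqref{MFG0} at order $\eps$ with $(u,m)=(\bar u+\eps v,\bar m+\eps\mu)$, using $\nabla\bar u_i\equiv0$, yields
\[
-\partial_t v_i-\sigma\Delta v_i=(JV\mu)_i,\qquad \partial_t\mu_i-\sigma\Delta\mu_i-\Delta v_i=0,
\]
with Neumann conditions and $v_i(T)=0$, $\mu_i(0)=0$. The change of basis \eqref{diagonal} decouples the two populations, and expanding each component in the orthonormal Neumann eigenbasis $(\psi_k)_k$ of \eqref{eigen_sys} reduces the kernel problem to a family of $2\times 2$ linear ODE boundary value problems on $[0,T]$, parametrized by $(i,k)$. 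An elementary analysis shows that a non-trivial solution exists iff $a_i<-\sigma^2\lambda_k$ and $T$ solves a transcendental equation; the hypothesis $\sigma^2\lambda_1<-a_1<\sigma^2\lambda_2$ forces $(i,k)=(1,1)$ to be the only admissible pair, and the $T$-equation is precisely \eqref{TC}. Simplicity of $\lambda_1$ then makes $\ker L(T^*_n)$ one-dimensional, spanned by a pair $(v^*,\mu^*)$ of the form appearing at leading order in \eqref{expa}; running the same computation on the adjoint problem shows $\mathrm{codim}\,\mathrm{range}\,L(T^*_n)=1$, so $L(T^*_n)$ is Fredholm of index zero.

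To conclude locally I verify the Crandall--Rabinowitz transversality condition $\partial_T L(T^*_n)(v^*,\mu^*)\notin\mathrm{range}\,L(T^*_n)$. By Fredholm duality this reduces to the non-vanishing of a pairing with the adjoint kernel, which in turn amounts to showing that the characteristic determinant of the above $2\times 2$ ODE has a \emph{simple} zero at $T=T^*_n$; a one-line calculation of $\partial_T\sin(\omega T+\phi)$ at $\omega T+\phi=n\pi$, with $\omega=\sqrt{\lambda_1(-a_1-\sigma^2\lambda_1)}$, confirms this. Crandall--Rabinowitz then furnishes the $C^1$ curve of non-trivial solutions with the expansion \eqref{expa} (after undoing the time rescaling). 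For the global statement, the transverse simple crossing forces a change of Leray--Schauder index of the trivial branch across $T^*_n$, and Rabinowitz's global alternative yields the continuum $C_n\subset S$ and the dichotomy (i)--(ii).

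The main obstacle I anticipate is in the functional setup needed for global bifurcation: one must realize $F$ as a $C^1$ map which is Fredholm at $T^*_n$ and is equivalent to an equation $\mathrm{id}-\Kcal$ with $\Kcal$ compact, all within a single Banach space. The quadratic term $\tfrac12|\nabla u_i|^2$ and the transport $\diverg(\nabla u_i\,m_i)$ force a careful balance of parabolic H\"older exponents, typically handled by inverting the heat part and applying Schauder estimates with a bootstrap. A secondary, but non-trivial, issue is ensuring that solutions along $C_n$ stay in $\{m_i>0\}$ where $V_i$ is smooth: automatic near $(\bar u,\bar m,T^*_n)$ from \eqref{expa}, but for the global continuum it requires either an a priori positivity argument or a preliminary truncation of $V_i$ that is removed a posteriori.
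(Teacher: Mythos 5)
Your proposal is essentially the paper's argument: rescale time to a fixed cylinder, reformulate as a compact fixed-point problem, diagonalize via $\Xi$ and expand in the Neumann eigenbasis to reduce the linearized operator to scalar ODE boundary-value problems on $[0,1]$, then invoke Crandall--Rabinowitz transversality and the Rabinowitz global alternative, and the paper's transversality check (an integration-by-parts identity equivalent to the characteristic determinant having a simple zero) matches your sketch. Two technical points you elide that the paper handles explicitly: it reduces the state space to the two $m$-components only (solving the HJB for $u$ inside the definition of the fixed-point map $\Gc$, which makes the compactness and differentiability arguments cleaner than carrying all four components), and it reparametrizes $T\mapsto\varphi(T)$ via a cutoff after first proving short-horizon uniqueness, so that the compact map is defined on all of $X\times\R$ as the global theorem requires while no spurious bifurcations are introduced for $T\le\overline T$.
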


 The theorem states the existence of continua $C_n$, or {\it branches}, that emanate from the trivial solution. Such continua can be either unbounded in $C^{4+\alpha, 2+\alpha/2} \times (0, + \infty)$ (that is, there are sequences $(u_j,m_j,T_j) \subset C_n$ such that $T_j + \|(u_j,m_j)\|_{[C^{4+\alpha, 2+\alpha/2}(Q_T)]^4} \to \infty$ as $j \to \infty$), or collapse back to another bifurcation point. See, e.g., Figure \ref{bif_ex}. Though in our numerical experiments the second possibility does not seem to occur (see Figures \ref{fig_2branches}, \ref{fig_5branches}, \ref{fig_sch_br}), namely all branches appear to be unbounded, we are not able at this stage to say whether or not this is a general fact.

Let us comment on the local parametrization \eqref{expa}, that describes non trivial solutions $m$ close to bifurcation points: $m$ can be seen as a perturbation of the trivial state $(1, 1)$, with split space-time dependance, of the form $\psi_{1}(x) \sin(2 \pi s/\tau)$. We observe that the period $\tau = 2\pi (\lambda_{1}(-a_1 - \sigma^2 \lambda_{1}))^{-1/2}$ {\it does not depend on $n$}, and $T^*_n$ reads
\[
T^*_n = \frac{\tau}{2}(n - \delta),
\]
for some $\delta = \delta(\sigma, \Omega, a_1)  < 1/2$. This means that, for $T$ close to $T^*_n$, \eqref{MFG0} has solutions such that $m$,  starting at $s = 0$ from the constant state, switches $n-1$ times between (approximately) the profiles $1 + \eps \psi_1(x)$ and $1 -\eps \psi_1(x)$ as $s$ increases. This {\it instability} between different states will be evident in numerical simulations that will be presented in the second part of this work. We observe that this oscillatory behaviour is quite in contrast with the stability of the ``monotone'' case, where the unique equilibrium $m$ is expected to converge to a unique stable state as $T \to \infty$, see \cite{CLLP}. Note that $T^*_n \to \infty$ as $n \to \infty$, therefore one can find solutions $m^\eps_i$ on $(0, T^\eps) \approx (0, T^*_n)$ exhibiting an arbitrarily large number of small oscillations.

Some numerical analysis will be carried out not only to visualize the shape of solutions predicted by Theorem \ref{glob_bifo_mfg} in particular models, but also to have a clue of their qualitative behaviour when $T$ is not close to bifurcation points, that is, far from $T^*_n$. Some properties will be pointed out, e.g. the conservation of the number of oscillations (in time) among every branch, but we stress that rigorous proofs of such observations are not available at this stage. Still, numerical insights can be matter of future work.

Theorem \ref{glob_bifo_mfg}, and in particular the representation formula \eqref{expa} in the perturbative regime $\eps \to 0$ (i.e. $T \to T^*_n$), suggest that there might be truly periodic in time solutions to the MFG system \eqref{MFG0} defined over $(-\infty, \infty)$. This is the matter of a subsequent work \cite{CNur}, that uses several ideas proposed in the present paper. We mention that when the time horizon is the whole line $(-\infty, \infty)$, solutions to the linearized system consist generally of a vector space with even dimension, that prevent the use of bifurcation results involved here. This issue is circumvented using the special variational (Hamiltonian) structure of \eqref{MFG0} in particular cases. Note that periodic solutions can be also obtained from travelling wave solutions if the state space $\Omega$ with Neumann conditions is replaced by a space-periodic environment (i.e. the flat torus). In this direction, results on so-called congestion problems appeared in \cite{GS}, see also \cite{YMMS}.

We finally note that \eqref{MFG0} is a MFG system with quadratic Hamiltonians, so it has a very special structure (see \cite{LL07, CardaNotes}). Actually, the existence statement of Theorem \ref{glob_bifo_mfg} holds (under suitable assumptions) if the Hamiltonians have a more general form $H(x, p)$; the key point is that, to perform a bifurcation analysis, one needs this special structure only locally, that is, close to the trivial state $\bar u$. In particular, it is basically sufficient to have $\nabla H(\cdot, 0) = 0$ and $D^2 H(\cdot, 0) = \kappa I$ on $\Omega$ for some $\kappa > 0$. This possible generalization will be discussed in Remark \ref{addrem2}, among other considerations on further relaxations of the standing assumptions.

\smallskip

The paper is organized as follows. Section \ref{s_bif} is devoted to the bifurcation analysis for \eqref{MFG0}, and the proof of Theorem \ref{glob_bifo_mfg}. In Section \ref{s_appl}, some applications to two different models will be presented, with a discussion on several observations based on numerical evidences.

\medskip

{\bf Acknowledgements.} This work has been partially supported by the Fondazione CaRiPaRo Project ``Nonlinear Partial Differential Equations: Asymptotic Problems and Mean-Field Games''.

\section{Bifurcation analysis}\label{s_bif}

\subsection{Notations and preliminaries}\label{s_np} Let $Q_T := \Omega \times (0,T)$, and for all $k \in \mathbb N$, $0< \beta < 1$, let $C^{2k+\beta,k+\beta/2}(Q_T)$ be set of continuous functions on $Q_T$ having derivatives $\partial^r_tD^{\omega}_xu$ that are $\beta$-H\"older continuous in the $x$-variable and $\beta/2$-H\"older continuous in $t$-variable for $2r+|\omega| \le 2k$ (see, e.g., \cite{Lieber}). For brevity, $Q := Q_1$.

We will denote by $\nu : \partial \Omega \to \RsetN$ the outer normal vector field at $\partial \Omega$. For a linear operator $\Lc$, $N(\Lc)$ and $R(\Lc)$ will be its kernel and its image respectively. Finally, $(\lambda_k, \psi_k$) will be the eigenpairs of \eqref{eigen_sys}.

We will use the following version of the global Rabinowitz Bifurcation Theorem. Let $X$ be a Banach space, and $\Fc \in C(X \times \R, X)$. Denote by $\mathcal S$ the closure of the set of nontrivial solutions of
\[
\Fc(x,\lambda) = 0
\]
in $X \times \R$.

\begin{thm}\label{global_bif} Assume that $\Fc(x,\lambda) = x- \Gc(x, \lambda)$, where $\Gc : X \times \R \to X$ is a compact mapping \footnote{that is, $\Gc$ is continuous, and the image under $\Gc$ of any bounded subsubset of $X \times \R$ has compact closure in $X$.}, $D_x \Fc(0,\cdot) = I - D_x \Gc(0,\cdot) \in C(\R, L(X,X))$ and it is differentiable. Suppose that zero is a geometrically simple (isolated) eigenvalue of $D_x \Fc(0,\lambda_0)$, and
\begin{equation}\label{transversality}
D^2_{x\lambda} \Fc(0,\lambda_0)[v_0] \notin R(D_x \Fc(0,\lambda_0)),
\end{equation}
where $v_0$ is the element spanning $N(D_x \Fc(0,\lambda_0))$. Then, $(0,\lambda_0) \in \mathcal S$. 

Let $\mathcal C$ be the connected component of $\mathcal S$ to which $(0,\lambda_0)$ belongs. Then
\begin{itemize}
\item[{\it i)}] $\mathcal C$ is unbounded, or
\item[{\it ii)}] $\mathcal C$ contains some $(0,\lambda_1)$, where $\lambda_0 \neq \lambda_1$.
\end{itemize}
\end{thm}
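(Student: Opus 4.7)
The plan is to use Leray--Schauder degree theory. Since $\Gc$ is compact, $\Fc(\cdot,\lambda) = I - \Gc(\cdot,\lambda)$ is a compact perturbation of the identity and the Leray--Schauder degree is available on every bounded open $U \subset X$ whose boundary carries no zero of $\Fc(\cdot,\lambda)$. Because zero is an isolated simple eigenvalue of $D_x\Fc(0,\lambda_0)$ and $\lambda \mapsto D_x \Fc(0,\lambda)$ is continuous, $D_x\Fc(0,\lambda)$ is invertible on a punctured neighborhood of $\lambda_0$; hence the origin is an isolated zero of $\Fc(\cdot,\lambda)$ there and the local Leray--Schauder index
\[
i(\lambda) := \deg(\Fc(\cdot,\lambda),B_r(0),0)
\]
is well defined and independent of $r$ for all sufficiently small $r > 0$.

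The first substantive step is the index jump across $\lambda_0$. A Lyapunov--Schmidt reduction at $(0,\lambda_0)$ based on the simple kernel isolates a real eigenvalue branch $\mu(\lambda)$ of $D_x\Fc(0,\lambda)$ with $\mu(\lambda_0) = 0$, and the transversality condition \eqref{transversality} translates precisely into $\mu'(\lambda_0) \neq 0$. Exactly one real eigenvalue of $D_x\Fc(0,\lambda)$ therefore changes sign as $\lambda$ crosses $\lambda_0$, and Krasnoselskii's formula for the index of an isolated zero, $i(\lambda) = (-1)^{\nu(\lambda)}$ with $\nu(\lambda)$ the total algebraic multiplicity of the negative eigenvalues of $D_x\Fc(0,\lambda)$, yields
\[
i(\lambda_0 + \eta) \;=\; -\, i(\lambda_0 - \eta) \;\in\; \{-1,+1\}
\]
for all sufficiently small $\eta > 0$.

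The global alternative is proved by contradiction: assume $\mathcal C$ is bounded in $X \times \R$ and contains no trivial point besides $(0,\lambda_0)$. Compactness of $\Gc$ makes $\mathcal S$ locally compact, and a classical Whyburn-type separation argument (as implemented in \cite{Kiel}, Ch.~II) produces a bounded open set $\mathcal U \subset X \times \R$ with $\mathcal C \subset \mathcal U$ and $\partial \mathcal U \cap \mathcal S = \emptyset$, arranged so that the only trivial point inside $\mathcal U$ is $(0,\lambda_0)$ itself. The slice degree $D(\lambda) := \deg(\Fc(\cdot,\lambda), \mathcal U_\lambda, 0)$ is then well defined and, by homotopy invariance, constant in $\lambda$; it vanishes because $\mathcal U_\lambda = \emptyset$ for $|\lambda|$ large. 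Enlarging $\mathcal U$ by a small tube $B_r(0) \times (\lambda_0 - \delta, \lambda_0 + \delta)$ around the trivial solution and applying additivity/excision to the resulting decomposition of $\mathcal U_{\lambda_0 \pm \eta}$, one obtains
\[
0 \;=\; D(\lambda_0 \pm \eta) \;=\; i(\lambda_0 \pm \eta) \;+\; R(\lambda_0 \pm \eta),
\]
where the remainder $R$ is independent of the sign of $\eta$ by a further homotopy argument applied to the piece containing only non-trivial zeros; this contradicts the jump of $i$ established above. The principal obstacle is precisely this topological construction: the Whyburn isolation of $\mathcal C$ must be coordinated with the isolating tube around the trivial solution so that bifurcating non-trivial zeros do not spoil degree additivity on the slices, and this delicate step rests on compactness of $\Gc$ together with the hypothesis that $(0,\lambda_0)$ is the unique trivial point of $\mathcal C$.
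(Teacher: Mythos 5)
The paper itself does not prove Theorem \ref{global_bif}: it is quoted from \cite{Kiel} (Theorem II.3.3 together with the discussion after Theorem II.4.4), and your outline --- Leray--Schauder index jump at $\lambda_0$ combined with a Whyburn-type isolation of $\mathcal C$ and homotopy invariance of slice degrees --- is precisely the classical Rabinowitz strategy behind that citation. So the route is the right one, but two of your steps do not go through as written.

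First, the index jump. You derive it from ``a real eigenvalue branch $\mu(\lambda)$ of $D_x\Fc(0,\lambda)$ with $\mu(\lambda_0)=0$ and $\mu'(\lambda_0)\neq 0$.'' That presupposes \emph{algebraic} simplicity, while the hypothesis is only \emph{geometric} simplicity plus \eqref{transversality}. Under these weaker assumptions the zero eigenvalue need not perturb as a real branch: in $\R^2$ the family $A(\lambda)=\begin{pmatrix}0 & 1\\ \lambda-\lambda_0 & 0\end{pmatrix}$ has a one-dimensional kernel at $\lambda_0$ and satisfies the transversality condition, yet its eigenvalues are $\pm\sqrt{\lambda-\lambda_0}$, a complex pair on one side of $\lambda_0$; no real eigenvalue ``changes sign.'' What is true, and what the cited discussion in \cite{Kiel} actually proves, is that \eqref{transversality} forces the Lyapunov--Schmidt bifurcation determinant (equivalently, the determinant of $D_x\Fc(0,\lambda)$ restricted to the spectral subspace of the eigenvalue group near $0$) to change sign at $\lambda_0$, i.e.\ the crossing number is odd; the Leray--Schauder/Krasnoselskii formula then gives $i(\lambda_0+\eta)=-i(\lambda_0-\eta)$. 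Your conclusion is correct, but it needs this determinant/crossing-number argument, not the eigenvalue-branch one.

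Second, the global bookkeeping. (a) The slice degree $D(\lambda)=\deg(\Fc(\cdot,\lambda),\mathcal U_\lambda,0)$ cannot be constant for all $\lambda$: $x=0$ is a zero of $\Fc(\cdot,\lambda)$ for \emph{every} $\lambda$, and the trivial line must cross $\partial\mathcal U$ at the parameters where it enters and leaves $\mathcal U$; those crossing points are in general not in $\mathcal S$, so $\partial\mathcal U\cap\mathcal S=\emptyset$ does not exclude them, the degree is undefined there, and homotopy invariance cannot be pushed across these parameter values. Hence ``$D\equiv 0$ because $\mathcal U_\lambda=\emptyset$ for large $|\lambda|$'' is not available without a finer construction of $\mathcal U$ near the trivial line. (b) The claim that the remainder $R(\lambda_0\pm\eta)$ is the same on both sides ``by a further homotopy argument applied to the piece containing only non-trivial zeros'' fails for a structural reason: the bifurcating non-trivial zeros emanating from $(0,\lambda_0)$ may meet $\partial B_r(0)$ for $\lambda$ arbitrarily close to $\lambda_0$, so the homotopy region $\mathcal U_\lambda\setminus\overline{B_r(0)}$ has zeros on its boundary for intermediate $\lambda$. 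The classical proofs (Rabinowitz; Kielh\"ofer's proof of Theorem II.3.3) circumvent both difficulties by choosing the isolating neighbourhood so that near the trivial line it has a prescribed cylindrical shape over a parameter interval containing no other characteristic values, and by comparing degrees only across parameter ranges where non-trivial zeros are uniformly bounded away from $x=0$; this is exactly where the real work of the theorem lies, and as written your final contradiction does not yet close.
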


For a proof of Theorem \ref{global_bif} we refer to \cite{Kiel}, Theorem II.3.3, and the discussion after Theorem II.4.4 (in particular, p. 213).

\begin{rem}\label{param_loc} If $\Fc \in C^2(X \times \R, X)$, $\mathcal C$ in Theorem \ref{global_bif} is a continuously differentiable curve in a neighbourhood of $(0,\lambda_0)$, parametrized by
\[s \mapsto (x, \lambda) = (s v_0 + s \psi (s), \lambda_0 + \varphi(s)),\]
where $\varphi(0)= 0, \psi(0)=0$, see \cite[Theorem I.5.1, Corollary I.5.2]{Kiel}.
\end{rem}

\smallskip
Throughout this section, we will set $\sigma = 1$ for simplicity, the analysis for different values of $\sigma > 0$ being identical (see Remark \ref{mfg_loc}). 

To treat \eqref{MFG0} with bifurcation methods, we shall do first a change of variables that involves a time rescaling. In particular let, on $\Omega \times [0,1]$,
\[
\begin{cases}
\su_i(x,t) := u_i(x,T t) - \bar u_i(x,T t) = u_i(x,T t) + T(t-1)V_i(1,1),  \\
\sm_i(x,t) := m_i(x,T t) - \bar m_i(x,T t) = m_i(x,T t) -1.
\end{cases}
\]
Then, \eqref{MFG0} becomes
\begin{equation}\label{MFG1}
\begin{cases}
-\frac{1}{T}\partial_t  \su_i - \sigma \Delta \su_i  + \frac{1}{2}|\nabla \su_i |^2 = V_i(1 + \sm_1, 1 + \sm_2) - V_i(1, 1), & \text{in $Q, \, i = 1,2$,} \\
\frac{1}{T}\partial_t \sm_i -  \sigma \Delta \sm_i  -  \Delta \su_i   - \diverg(\nabla \su_i \, \sm_i ) = 0
%m(x,0) = m_0(x), \quad u(x,T) = u_T(x) & \text{on $\T$}.
\end{cases}
\end{equation}
with boundary conditions
\[
\begin{cases}
\partial_\nu \su_i = \partial_\nu \sm_i = 0 &\text{on $\partial \Omega \times (0,1)$} , \\
\sm_i(x,0)= \su_i(x,1)= 0 & \text{on $\Omega$}.
\end{cases}
\]
Note that the space-time domain is fixed in \eqref{MFG1}, and $T$ appears as a parameter in the equations. Moreover, $(\su, \sm) \equiv (0, 0, 0, 0)$ is the trivial solution for all $T$. 

We will consider solutions to \eqref{MFG1} as fixed points of a non-linear functional. For any fixed $\alpha \in (0,1)$, let
\[
X := \{(\sm_1, \sm_2) \in C^{4+\alpha, 2+\alpha/2}(Q) \times C^{4+\alpha, 2+\alpha/2}(Q) :  \text{$\partial_\nu \sm_i = 0$ on $\partial \Omega \times (0,1)$ and $\sm_i(\cdot,0)=0$ on $\Omega$} \}.
\]
For any $\sm \in X, T > 0$, define the mapping $\mu = \Gc(\sm, T)$ in such a way that $\mu$ is the classical solution of
\begin{equation}\label{Gdef}
\begin{cases}
-\frac{1}{T} \partial_t \su_i - \Delta \su_i + \frac{1}{2}|\nabla \su_i|^2 = V_i(1 + \sm_1, 1 + \sm_2) - V_i(1, 1),& \text{in $Q$,} \\
\frac{1}{T} \partial_t \mu_i -  \Delta \mu_i -  \Delta \su_i  - \diverg(\nabla \su_i\, \mu_i) = 0, \\
\partial_\nu \su = \partial_\nu \mu = 0 &\text{on $\partial \Omega \times (0,1)$} , \\
\mu(x,0)= \su(x,1)= 0 & \text{on $\Omega$}.
%m(x,0) = m_0(x), \quad u(x,T) = u_T(x) & \text{on $\T$}.
\end{cases}
\end{equation}

It is clear that solutions to \eqref{MFG1} (and therefore to \eqref{MFG0}) are fixed points of $\Gc$, or, in other words, zeroes of the following functional
\[
\Fc(\sm, T) := -\sm + \Gc(\sm, T) \quad \forall (\sm, T) \in X \times (0, \infty).
\]
Note that $\Fc(0, T) = 0$ for all $T > 0$, so it is natural to treat $T$ as a bifurcation parameter.

With this point of view in mind, we aim at applying Theorem \ref{global_bif}. We begin by showing that, for small $T$, \eqref{MFG1} has only the trivial solution, namely $\Gc$ has just a trivial fixed point.

\begin{lem}\label{unique} There exists $\overline T > 0$ such that if \eqref{MFG1} has a (classical) non-trivial solution $(\su, \sm, T)$, then $T > \overline T$.
\end{lem}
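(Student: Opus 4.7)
The plan is to prove the lemma by a priori estimates on classical solutions of \eqref{MFG1}, combined with a local invertibility statement around the trivial solution. More precisely, I would show that any classical solution $(\su, \sm, T)$ satisfies $\|\sm\|_X \to 0$ as $T \to 0^+$, and then use invertibility of $D_{\sm}\Fc(0, T)$ for $T$ small to conclude $\sm \equiv 0$ (whence also $\su \equiv 0$), yielding the required threshold $\overline T$.

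The starting point is an a priori estimate on $\su$. Since $V_i$ is bounded on $\R^2$ by \eqref{Vass}, the right-hand side $V_i(1+\sm_1, 1+\sm_2) - V_i(1,1)$ of the HJ equation for $\su_i$ is controlled by a constant $K := 2\|V_i\|_\infty$ independently of $\sm$. Combining this with the terminal condition $\su_i(\cdot, 1) = 0$ and the standard comparison principle for viscous HJ equations against $\pm K T(1-t)$, I obtain $\|\su_i\|_\infty \le K T$. To bootstrap to a higher-order bound, I would exploit the quadratic structure of the Hamiltonian via the Cole--Hopf substitution $\phi_i := e^{-\su_i/2}$, which turns the HJ equation into a linear parabolic equation for $\phi_i$ with bounded coefficient and terminal datum $\phi_i(\cdot,1) = 1$. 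Since $\phi_i \equiv 1$ is an approximate solution whose residual has size $O(T)$, classical parabolic Schauder estimates give $\|\phi_i - 1\|_{C^{2+\alpha, 1+\alpha/2}(Q)} \le CT^\eta$ and hence $\|\su_i\|_{C^{2+\alpha, 1+\alpha/2}(Q)} \le CT^\eta$ for some $\eta > 0$ and $C$ independent of $T$ and of the particular solution.

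Equipped with this control on $\su$, the Fokker--Planck equation for $\sm_i$ becomes a linear parabolic equation with small drift $\nabla \su_i$, small source $\Delta \su_i$ and zero initial datum, so standard parabolic theory yields $\|\sm\|_X \to 0$ as $T \to 0^+$. Once the solution lies in an arbitrarily small $X$-neighbourhood of $0$, a contraction argument on $\Gc(\cdot, T)$ --- equivalently, the fact that $D_{\sm}\Gc(0, T) \to 0$ in $L(X,X)$ as $T \to 0^+$, so that $D_{\sm}\Fc(0, T) = -I + D_{\sm}\Gc(0, T)$ is invertible for $T$ small --- forces $\sm \equiv 0$ and then $\su \equiv 0$, contradicting non-triviality. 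The main technical obstacle is balancing the singular factor $1/T$ in front of the time derivatives against the smallness of the data; the Cole--Hopf linearisation is the cleanest route I see to both sidestep the quadratic nonlinearity $\tfrac{1}{2}|\nabla \su|^2$ and make the $T^\eta$-decay transparent.
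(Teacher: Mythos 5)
Your approach is genuinely different from the paper's. The paper disposes of this lemma in one line by invoking an external small-time uniqueness result, namely Corollary~3.1 of~\cite{BCir}, which asserts that the original system~\eqref{MFG0} (equivalently~\eqref{Gdef}) has a unique classical solution whenever $T\le\overline T$ for some $\overline T>0$; since the trivial solution always exists, any nontrivial solution must have $T>\overline T$. You instead sketch a self-contained argument: $\sup$-bound on $\su$ by comparison against $\pm KT(1-t)$ using boundedness of $V_i$, Cole--Hopf to linearise the HJ equation and upgrade to H\"older control with an explicit rate $T^\eta$, propagation of smallness through the Fokker--Planck equation to get $\|\sm\|_X\to 0$, and finally a contraction/inverse-function argument on $\Gc(\cdot,T)$ near the trivial fixed point. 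This is, in spirit, exactly the kind of argument that underlies the cited corollary, so your route is defensible and has the merit of being transparent about \emph{why} smallness of $T$ forces triviality, and of exploiting the quadratic Hamiltonian which the paper's cited reference does not need.

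However, a few steps in your sketch are genuinely delicate and would need substantial work to make rigorous. First, in the rescaled formulation on $Q=\Omega\times(0,1)$, dividing the equations by $1/T$ produces a diffusion coefficient $T$ that \emph{degenerates} as $T\to 0^+$; parabolic Schauder constants are not uniform in the diffusion coefficient, so the estimate $\|\phi_i-1\|_{C^{2+\alpha,1+\alpha/2}(Q)}\le CT^\eta$ with $C$ independent of $T$ does not follow from ``standard'' Schauder theory as stated, and the same issue reappears when you try to get smallness of $\sm$ in the very strong norm of $X=C^{4+\alpha,2+\alpha/2}$. (Working on the unrescaled domain $Q_T$ with fixed $\sigma$ and shrinking time interval avoids the degeneracy but creates the opposite problem of comparing norms on varying domains.) Second, the concluding step needs to be quantitative: invertibility of $D_\sm\Fc(0,T)$ gives uniqueness only in a ball whose radius depends on the modulus of continuity of $D_\sm\Fc$ near $(0,T)$; you must check that the a priori bound $\|\sm\|_X\le\omega(T)$ you derive lands inside that ball uniformly as $T\to 0^+$, which requires controlling both quantities simultaneously. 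Neither obstruction is fatal --- this is essentially what a full proof of the cited small-time uniqueness result must do --- but as written your sketch passes over them, whereas the paper deliberately outsources the entire issue to~\cite{BCir}.
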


\begin{proof} By \cite[Corollary 3.1]{BCir}, there exists $\overline T > 0$ such that \eqref{MFG0}, and therefore \eqref{Gdef}, has a unique classical solution for all $T \le \overline T$. Since  \eqref{MFG0} has the trivial solution for all $T > 0$, a non-trivial solution may exist only if $T > \overline T$. \end{proof}

We now show that $\Fc$ is a so-called {\it compact perturbation of the identity}.

\begin{lem}\label{compcont} $\Gc : X \times (0, \infty) \to X$ is a compact mapping, and $D_\sm \Gc(0, \cdot) \in C(\R, L(X,X))$.
\end{lem}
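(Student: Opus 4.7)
The plan is to decompose $\Gc$ into two successive solution maps. Given $\sm \in X$, the first line of \eqref{Gdef} determines $\su$ as the classical solution of the backward viscous Hamilton--Jacobi problem with smooth source $f_i := V_i(1+\sm_1, 1+\sm_2) - V_i(1,1)$, zero terminal datum and Neumann conditions; the second line then determines $\mu = \Gc(\sm, T)$ as the solution of a linear forward Fokker--Planck problem with coefficients built from $\su$ and zero initial datum. I would verify both claims of the lemma by pushing parabolic Schauder theory through this composition.

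For compactness, I linearize the HJ equation via the Hopf--Cole substitution $w_i := e^{-\su_i/2}$, which turns it into the linear problem
\[
-\tfrac{1}{T}\partial_t w_i - \Delta w_i + \tfrac{1}{2} f_i\, w_i = 0, \qquad w_i(\cdot,1)\equiv 1, \qquad \partial_\nu w_i = 0.
\]
Since $V\in C^\infty$, $f_i$ is bounded in $C^{4+\alpha,2+\alpha/2}(Q)$ uniformly in $\sm$ lying in a bounded subset of $X$; a maximum principle yields $0<c\le w_i\le C$, whence $\su_i$ is bounded in $L^\infty$. Iterating linear parabolic Schauder estimates ($C^{0}\to C^{2+\alpha,1+\alpha/2}\to C^{4+\alpha,2+\alpha/2}\to C^{6+\alpha,3+\alpha/2}$) on the equation for $w_i$ then bounds $\su_i$ uniformly in $C^{6+\alpha,3+\alpha/2}(Q)$. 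Plugging this into the Fokker--Planck equation, which is linear in $\mu_i$ with smooth transport coefficient $\nabla \su_i$ and source $\Delta \su_i$, one more Schauder estimate bounds $\mu_i = \Gc(\sm,T)_i$ in $C^{6+\alpha,3+\alpha/2}(Q)$ uniformly for $\sm$ bounded in $X$ and $T$ in any compact subinterval of $(0,\infty)$. Since $C^{6+\alpha,3+\alpha/2}(Q)\hookrightarrow C^{4+\alpha,2+\alpha/2}(Q)$ is compact on the bounded cylinder $Q$ (Arzel\`a--Ascoli), $\Gc$ sends bounded sets into pre-compact ones, and continuity of $\Gc$ follows from the corresponding continuous-dependence estimates for the same linear parabolic problems.

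For the second statement, at $\sm=0$ we have $f_i \equiv 0$ and hence $\su_i \equiv 0$, so differentiating \eqref{Gdef} in $\sm$ along $\phi \in X$ yields $D_\sm \Gc(0,T)[\phi]=\mu$, where
\[
-\tfrac{1}{T}\partial_t v_i - \Delta v_i = (JV\cdot \phi)_i,\ v_i(\cdot,1)=0, \qquad \tfrac{1}{T}\partial_t \mu_i - \Delta \mu_i = \Delta v_i,\ \mu_i(\cdot,0)=0,
\]
both with homogeneous Neumann data. The only dependence on $T$ is through the smooth coefficient $1/T$; subtracting the systems corresponding to $T$ and $T'$ and reapplying the parabolic Schauder estimates from the previous step produces $\|D_\sm \Gc(0,T) - D_\sm\Gc(0,T')\|_{L(X,X)}\to 0$ as $T'\to T$.

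The main technical obstacle I expect is running the Schauder bootstrap all the way up to regularity $C^{6+\alpha,3+\alpha/2}$ on the closed cylinder $\bar Q$: this needs compatibility conditions at the corner $\partial\Omega\times\{0,1\}$, which in the present setting are supplied by $\sm(\cdot,0)=0$, the zero terminal/initial conditions for $\su$ and $\mu$, and the Neumann condition. A secondary and easier point is the uniform positivity of $w_i$ required to invert Hopf--Cole globally, which follows from the strong maximum principle once $f_i\in L^\infty$.
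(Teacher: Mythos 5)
Your proof is correct, and it follows the same overall scheme as the paper's: solve the backward Hamilton--Jacobi equation for $\su$, feed $\nabla\su$ into the linear Fokker--Planck equation for $\mu$, bootstrap to $C^{6+\alpha,3+\alpha/2}(Q)$ for $T$ in a compact subinterval of $(0,\infty)$, invoke the compact embedding $C^{6+\alpha,3+\alpha/2}\hookrightarrow C^{4+\alpha,2+\alpha/2}$, and then get continuity and continuity of $D_\sm\Gc(0,\cdot)$ from linear parabolic stability. The linearized system you write down matches equation \eqref{diff_zero}, up to multiplying through by $T$.

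The one genuinely different ingredient is how you obtain a priori control on $\su$. You use the Hopf--Cole substitution $w_i=e^{-\su_i/2}$, which converts the quadratic HJ equation into a linear backward heat equation with bounded zeroth-order potential $\tfrac12 f_i$, so the maximum principle gives two-sided positive bounds on $w_i$ and then linear Schauder theory can be iterated without ever touching a nonlinear estimate. The paper instead works directly on the quasilinear HJ equation and quotes Lieberman's gradient estimates (Theorems XIII.13.15 and XIII.13.16) to bound $\nabla\su_i$ in a Hölder space, and only then bootstraps. Your route is cleaner and entirely elementary, but it is wedded to the exactly quadratic Hamiltonian; the paper's route is more robust and is what allows the extension to general Hamiltonians $H_i(x,p)$ satisfying the growth condition $\textit{iii)}$ of Remark \ref{addrem2}, where no Hopf--Cole transform is available. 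Two small points worth tightening if you write this up: the paper also records that $\Gc$ is Fr\'echet (not merely G\^ateaux) differentiable at every $(\sm,T)$, which you need in order to call the operator defined by your linearized system $D_\sm\Gc(0,T)$ in the sense required by Theorem \ref{global_bif}, so a one-line uniformity remark converting the G\^ateaux computation into Fr\'echet differentiability is appropriate; and the corner-compatibility issue you flag is real but is equally present in the paper's argument, and is partly built into the definition of $X$ (which already imposes $\partial_\nu\sm=0$ and $\sm(\cdot,0)=0$), so noting that $\partial_\nu f_i=0$ on $\partial\Omega\times[0,1]$ follows from $\partial_\nu\sm_j=0$ would close the most visible gap.
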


\begin{proof}
We claim that, by standard parabolic regularity, $\Gc(\sm, T) \in C^{6+\alpha, 3+\alpha/2}(Q) \times C^{6+\alpha, 3+\alpha/2}(Q)$ for all $(\sm, T) \in X \times (0, \infty)$. In particular, any bounded set of the form $Y \times (t_0, t_1) \subset X \times (0, \infty)$, $t_0 > 0$, is mapped by $\Gc$ into a bounded subset of $C^{6+\alpha, 3+\alpha/2}(Q) \times C^{6+\alpha, 3+\alpha/2}(Q)$, whose closure is compact in $X$. Indeed, if $(\sm, T) \in Y \times (t_0, t_1)$, $\nabla \su_i$ are bounded in some H\"older space by \cite[Theorem XIII.13.15]{Lieber} and  \cite[Theorem XIII.13.16]{Lieber}. Parabolic regularity for linear equations in divergence and non-divergence form (see, e.g., \cite[Theorem IV.4.30]{Lieber} and \cite[Theorem IV.4.31]{Lieber}) then yields the claim. Continuity of $\Gc$ follows by stability of the Hamilton-Jacobi-Bellman and Fokker-Planck equations in \eqref{Gdef}, which is a standard by-product of parabolic regularity and respective uniqueness of solutions.

By computation (and again stability of linear parabolic equations with respect to their coefficients), $\Gc$ is G\^ateaux differentiable at every $(\sm, T) \in X \times (0, \infty)$, and $z = D_\sm \Gc(\sm, T)[h]$ satisfies
\[
\begin{cases}
-\frac{1}{T}\partial_t v_i - \Delta v_i + \nabla \su_i \cdot \nabla v_i = \partial_{m_1} V_i(1 + \sm_1, 1+ \sm_2)h_1 + \partial_{m_2} V_i(1 + \sm_1, 1+ \sm_2)h_2,& \text{in $Q$,} \\
\frac{1}{T}\partial_t z_i -  \Delta z_i -  \Delta v_i  - \diverg(\nabla v_i \, \mu_i)  - \diverg(\nabla \su_i\, z_i) = 0, \\
\partial_\nu v_i = \partial_\nu z_i = 0 &\text{on $\partial \Omega \times (0,1)$} , \\
z_i(x,0)= v_i(x,1)= 0 & \text{on $\Omega$}.
\end{cases}
\]
By continuity of $\sm \mapsto (\su, \mu)$ (uniform w.r.t $\|h\|_{C^{4+\alpha, 2+\alpha/2}(\Omega) \times C^{4+\alpha, 2+\alpha/2}(\Omega)} = 1$) and stability of the equations , $\Gc$ is also Fr\'echet differentiable. Note finally that $z = D_\sm \Gc(0, T)[h]$ satisfies
\begin{equation}\label{diff_zero}
\begin{cases}
- \partial_t v_i - T \Delta v_i = T[\partial_{m_1} V_i(1, 1) h_1 + \partial_{m_2} V_i(1, 1)h_2],& \text{in $Q$,} \\
\partial_t z_i -  T \Delta z_i - T \Delta v_i  = 0, \\
\partial_\nu v_i = \partial_\nu z_i = 0 &\text{on $\partial \Omega \times (0,1)$} , \\
z_i(x,0)= v_i(x,1)= 0 & \text{on $\Omega$}.
\end{cases}
\end{equation}
and continuity of the differential $D_\sm \Gc$ can be easily verified.
\end{proof}

\subsection{The linearized system}

The application of Theorem \ref{global_bif} relies on the study of the linearization of \eqref{MFG1}. We will in particular analyse the linearized system \eqref{diff_zero} using expansions over the eigenfunctions $\psi_k(x)$ of the Neumann $-\Delta$ (as in \eqref{eigen_sys}): every $\sm_i$ admits a unique representation in terms of time-dependent coefficients $\sm_{i,k}(t) := \int_\Omega \sm_i(x,t) \psi_k(x)dx$, that is,
\begin{multline}
X = \{(\sm_1, \sm_2) \in C^{4+\alpha, 2+\alpha/2}(Q) \times C^{4+\alpha, 2+\alpha/2}(Q): \  \sm_i(x,t) = \sum_{k \ge 0} \sm_{i,k}(t) \psi_k(x), \\ \text{ for some $(\sm_{i,k})_{k \ge 0} \subset C^{2+\alpha/2}([0,1])$ s.t. $\sm_{i,k}(0) = 0\, \forall i=1,2, k\ge 0$}\}.
\end{multline}

In the sequel, we will identify an $f \in C^{4+\alpha, 2+\alpha/2}(Q)$ with its associated sequence of coefficients $(f_k)_{k \ge 0} \subset C^{2+\alpha/2}([0,1])$.

Note that, setting $z = \Xi \cdot\bar  z$, $v = \Xi \cdot\bar  v$ and $h = \Xi \cdot\bar  h$ (in general, we will use the notation $x = \Xi\cdot \bar x$ for any vector in $x \in \R^2$, treating $x, \bar x$ as column vectors), then \eqref{diff_zero} decouples, and becomes equivalent to
\begin{equation}\label{diff_zero_xi}
\begin{cases}
- \partial_t \bar v_i - T \Delta \bar v_i = T a_i \bar h_i,& \text{in $Q$,} \\
\partial_t \bar z_i -  T \Delta \bar z_i - T \Delta \bar v_i  = 0, \\
\partial_\nu v_i = \partial_\nu z_i = 0 &\text{on $\partial \Omega \times (0,1)$} , \\
z_i(x,0)= v_i(x,1)= 0 & \text{on $\Omega$},
\end{cases}
\end{equation}
where $a_i$, $\Xi$ are as in \eqref{diagonal}. Let
\[
\Lc(T) := D_\sm \Fc(0, T) =  I - D_\sm \Gc(0, T).
\]

\begin{lem}\label{L_coeff} For any $z, \tilde h \in X$, we have that $z = \Lc(T)[\tilde h]$ if and only if, setting $h = \tilde h - z$, $h_{i,k}$ satisfies
\begin{equation}\label{L_coeff_s}
\begin{cases}
h_{i,k}''(t) = T^2 \lambda_k^2 h_{i,k}(t) + T^2  \lambda_k [\partial_{m_1} V_i(1, 1) (h_{1,k}(t) + z_{1,k}(t))+ \partial_{m_2} V_i(1, 1) (h_{2,k}(t) + z_{2,k}(t))], \\
h_{i,k}(0)=0, \quad h'_{i,k}(1) + T \lambda_k h_{i,k}(1)  = 0
\end{cases}
\end{equation}
for all $k \ge 0$, $i=1,2$.
\end{lem}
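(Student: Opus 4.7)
The plan is to unfold the definition $\Lc(T) = I - D_\sm \Gc(0,T)$ and rewrite the statement $z = \Lc(T)[\tilde h]$ as $h := \tilde h - z = D_\sm \Gc(0,T)[\tilde h]$. By the explicit form \eqref{diff_zero} of the differential at zero, this is equivalent to saying that $(v, h)$ is the classical solution of the linear parabolic system obtained from \eqref{diff_zero} with input $\tilde h = h + z$, i.e.
\[
-\partial_t v_i - T\Delta v_i = T\bigl[\partial_{m_1}V_i(1,1)(h_1+z_1) + \partial_{m_2}V_i(1,1)(h_2+z_2)\bigr], \qquad \partial_t h_i - T\Delta h_i - T\Delta v_i = 0,
\]
with Neumann lateral conditions and $h_i(\cdot,0) = v_i(\cdot,1)=0$. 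The goal is then to project this system on the Neumann eigenbasis $\{\psi_k\}$ and eliminate $v_{i,k}$.

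The core computation is the following. Because $\psi_k$ is smooth, $-\Delta \psi_k = \lambda_k \psi_k$, and $\partial_\nu \psi_k = 0$ on $\partial\Omega$, the functions $h_i, v_i$ admit convergent eigenexpansions and testing both PDEs against $\psi_k$ yields, for every $k\ge 0$ and $i=1,2$,
\[
h_{i,k}'(t) + T\lambda_k h_{i,k}(t) = -T\lambda_k v_{i,k}(t), \qquad -v_{i,k}'(t) + T\lambda_k v_{i,k}(t) = T\sum_{j=1,2}\partial_{m_j}V_i(1,1)\bigl(h_{j,k}(t)+z_{j,k}(t)\bigr).
\]
For $k\ge 1$ I would solve the first relation for $v_{i,k} = -(T\lambda_k)^{-1}(h_{i,k}' + T\lambda_k h_{i,k})$, differentiate it in $t$, and substitute $-v_{i,k}'$ from the HJB Fourier equation. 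After a cancellation of the $T\lambda_k h_{i,k}'$ term on each side, what remains is exactly the second-order ODE \eqref{L_coeff_s}.

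For the boundary conditions, $h_{i,k}(0)=0$ is inherited directly from $h_i(\cdot, 0)=0$, while evaluating the expression $v_{i,k}(t) = -(T\lambda_k)^{-1}(h_{i,k}'(t)+T\lambda_k h_{i,k}(t))$ at $t=1$ and using $v_{i,k}(1) = 0$ gives $h_{i,k}'(1) + T\lambda_k h_{i,k}(1) = 0$. For the exceptional mode $k=0$, one has $\lambda_0 = 0$, so the claimed ODE degenerates to $h_{i,0}'' = 0$ with $h_{i,0}(0)=h_{i,0}'(1)=0$, forcing $h_{i,0}\equiv 0$; this is consistent with the equations, since integrating the Fokker--Planck equation in $x$ together with the Neumann conditions and $h_i(\cdot,0)=0$ yields $\int_\Omega h_i(x,t)\,dx \equiv 0$. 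The converse direction is obtained by reversing the same manipulations and using that the coefficient sequences determine the functions uniquely in $C^{4+\alpha,2+\alpha/2}(Q)$.

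I don't expect a genuine obstacle here: the result is essentially separation of variables. The most delicate point is bookkeeping—keeping track that the right-hand side of the HJB equation involves $\tilde h = h + z$ (not just $h$), and making sure that the manipulation eliminating $v_{i,k}$ is valid precisely for $k \ge 1$ while the $k=0$ case is handled by the mass-conservation argument above.
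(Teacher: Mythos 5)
Your proof is correct and follows essentially the same route as the paper: project the linearized system \eqref{diff_zero} (with input $\tilde h = h+z$ and output $h$) onto the Neumann eigenbasis, eliminate $v_{i,k}$, and read off the second-order ODE and Robin boundary condition. The only cosmetic difference is that you divide by $T\lambda_k$ to solve for $v_{i,k}$ and hence must handle $k=0$ separately via the mass-conservation argument, whereas the paper differentiates the Fokker--Planck projection $h_{i,k}'+T\lambda_k h_{i,k}+T\lambda_k v_{i,k}=0$ and multiplies through, which treats $k=0$ and $k\ge 1$ uniformly (the $T\lambda_k$ factors simply vanish); both variants yield the same conclusion.
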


Note that, in a slightly more compact form, \eqref{L_coeff_s} reads
\[
\begin{cases}
h_k''(t) = T^2 \lambda_k^2 h_k(t) + T^2  \lambda_k \, JV \cdot (h_{k}(t) + z_{k}(t)) & \text{in $(0,1)$,} \\
h_k(0)=0, \quad h'_k(1) + T \lambda_k h_k(1)  = 0
\end{cases}
\]

\begin{proof}  Note that $h$ is such that $D_\sm \Gc(0, T)[h+z] = h$, $h_i(\cdot, 0) = 0$ on $\Omega$ and $\partial_\nu h_i = 0$ on $\partial \Omega \times (0,1)$. Recalling that $(-\Delta u)_k = u_k \lambda_k$, if $g_{i,k} := [\partial_{m_1} V_i(1, 1) (h_{1,k} + z_{1,k}) + \partial_{m_2} V_i(1, 1) (h_{2,k} + z_{2,k})]$, then projecting \eqref{diff_zero} onto $\psi_k$ yields
\[
\begin{cases}
- v_{i,k}' + T \lambda_k v_{i,k} =T g_{i,k},& \text{in $(0, 1)$,} \\
h_{i,k}' + T \lambda_k h_{i,k} + T \lambda_k v_{i,k}  = 0 \\
h_{i,k}(0) = v_{i,k}(1) = 0
\end{cases}
\]
for all $k \ge 0$, $i=1,2$. By an easy computation, this system is equivalent to a second order equation, namely
\[
h_{i,k}'' + T \lambda_k h_{i,k}' = - T \lambda_k v_{i,k}' = T \lambda_k(h'_{i,k} + T \lambda_k h_{i,k} + Tg_{i,k}).
\]
Moreover, $h_{i,k}(0) = 0$, and $0 = h_{i,k}'(1) + T \lambda_k h_{i,k}(1) + T \lambda_k v_{i,k}(1) = h_{i,k}'(1) + T \lambda_k h_{i,k}(1)$,
and the statement follows.
\end{proof}

\begin{lem}\label{whoisLp} We have that $w = \Lc'(T)[\tilde h] := D_{\sm T} \Fc(0, T)[\tilde h] $ if and only if $w$ satisfies
\[
\begin{cases}
-w_k''(t) = 2T\lambda_k^2 \tilde{h}_k + 2T\lambda_k JV \cdot \tilde{h}_k -2T \lambda_k^2 z_k - T^2 \lambda_k^2 w_k & \text{in $(0,1)$,} \\
w_k(0)=0, \quad w'_k(1) + T \lambda_k w_k(1)  = \lambda_k({\tilde h}_k(1) - z_k(1))
\end{cases}
\]
for all $k \ge 0$, where $z = \Lc(T)[\tilde h]$.
\end{lem}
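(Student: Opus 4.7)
The plan is to obtain $\Lc'(T)$ by differentiating, with respect to $T$, the ODE characterisation of $z(T) := \Lc(T)[\tilde h]$ established in Lemma \ref{L_coeff}. Setting $h(T) := \tilde h - z(T)$ and using $h_k + z_k = \tilde h_k$, the compact form of \eqref{L_coeff_s} rewrites, for each $k \ge 0$, as a two-point boundary value problem
\begin{equation*}
h_k''(t) = T^2 \lambda_k^2\, h_k(t) + T^2 \lambda_k\, JV \cdot \tilde h_k(t), \qquad h_k(0) = 0, \qquad h_k'(1) + T\lambda_k\, h_k(1) = 0,
\end{equation*}
whose coefficients depend smoothly on $T$ while the source $\tilde h_k$ is $T$-independent. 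By standard smooth dependence of solutions on parameters (upgraded to the $X$-level using the parabolic estimates already invoked in the proof of Lemma \ref{compcont}, now applied to the $T$-differentiated version of \eqref{diff_zero}), the map $T \mapsto z(T)$ is $C^1$ into $X$, and $w := \partial_T z = -\partial_T h$ coincides with $\Lc'(T)[\tilde h]$.

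I would then differentiate the bulk equation for $h_k$ in $T$; substituting $\partial_T h_k = -w_k$ produces
\begin{equation*}
-w_k''(t) = 2T\lambda_k^2\, h_k(t) + 2T\lambda_k\, JV \cdot \tilde h_k(t) - T^2 \lambda_k^2\, w_k(t),
\end{equation*}
and replacing $h_k$ by $\tilde h_k - z_k$ in the first summand produces exactly the bulk equation stated in the lemma. The initial condition $h_k(0) = 0$ is $T$-independent, giving $w_k(0) = 0$; differentiating the Robin-type condition $h_k'(1) + T\lambda_k h_k(1) = 0$ in $T$ gives $-w_k'(1) + \lambda_k h_k(1) - T\lambda_k w_k(1) = 0$, which rearranges to $w_k'(1) + T\lambda_k\, w_k(1) = \lambda_k(\tilde h_k(1) - z_k(1))$, as claimed.

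The only step that is not a routine algebraic manipulation is the $C^1$ dependence of $\Lc$ on $T$ as a map into $L(X,X)$, which I expect to be the main technical point. This is however essentially built into Lemma \ref{compcont}: the coefficients in \eqref{diff_zero} depend polynomially on $T$, so differentiating that system in $T$ yields another linear parabolic system of the same structure to which the same $C^{4+\alpha,2+\alpha/2}$ estimates apply uniformly on compact $T$-intervals. Once this regularity is secured, the projection onto $\psi_k$ commutes with $\partial_T$ and the derivation above is complete.
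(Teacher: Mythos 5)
Your proof is correct and follows exactly the same route as the paper, which simply states ``Follows by differentiating \eqref{L_coeff_s} (in its compact form).'' You have carried out the differentiation explicitly, verified the boundary conditions, and supplied the (reasonable) justification for the $C^1$ dependence of $\Lc$ on $T$ that the paper leaves implicit.
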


\begin{proof} Follows by differentiating \eqref{L_coeff_s} (in its compact form). \end{proof}

\begin{lem}\label{whoisker} The kernel $N(\Lc(T))$ is spanned by $h^0, h^1, \ldots$, where
\[
h^j(x,t) = \psi_{k_j}(x) \Xi \cdot \left[\sin \left(T\sqrt{\lambda_{k_j}(-a_1 - \lambda_{k_j})}\, t\right) , 0\right]^T \quad \text{on $Q$}
\]
are defined by the eigenvalues $0 < \lambda_{k_j} < a_1$ that satisfy the equation
\begin{equation}\label{kernel_eq}
\tan \left(T\sqrt{\lambda_{k_j}(-a_1 - \lambda_{k_j})}\right) = -\sqrt{\frac{-a_1- \lambda_{k_j}}{\lambda_{k_j}}}.
\end{equation}
\end{lem}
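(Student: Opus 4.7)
The plan is to reduce the kernel equation to a family of decoupled, explicitly solvable boundary value problems on $(0,1)$ by combining Lemma \ref{L_coeff} with the diagonalization \eqref{diagonal}, and then to perform a case analysis on the sign of $\lambda_k + a_i$.

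First, I would observe that $\tilde h \in N(\Lc(T))$ means $\Lc(T)[\tilde h]=0$, i.e. $z=0$ in Lemma \ref{L_coeff}, so $h = \tilde h - z = \tilde h$ and the coupled ODE system becomes
\[
\begin{cases}
h_k''(t) = T^2 \lambda_k^2 h_k(t) + T^2 \lambda_k \, JV \cdot h_k(t), & t \in (0,1), \\
h_k(0) = 0, \quad h'_k(1) + T\lambda_k h_k(1) = 0,
\end{cases}
\]
for each $k \ge 0$ and $i=1,2$. Setting $h_k = \Xi \cdot \bar h_k$ and using \eqref{diagonal} decouples the system into scalar problems
\[
\begin{cases}
\bar h_{i,k}''(t) = T^2 \lambda_k (\lambda_k + a_i)\, \bar h_{i,k}(t), \\
\bar h_{i,k}(0) = 0, \quad \bar h'_{i,k}(1) + T\lambda_k \bar h_{i,k}(1) = 0.
\end{cases}
\]

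Next I would do a case analysis. For $k=0$ one has $\lambda_0 = 0$ so $\bar h_{i,0}'' = 0$ with $\bar h_{i,0}(0) = \bar h'_{i,0}(1)=0$, forcing $\bar h_{i,0}\equiv 0$. For $k\ge 1$: if $\lambda_k + a_i > 0$, the general solution is a hyperbolic sine, and imposing $\bar h_{i,k}(0)=0$ followed by the Robin-type condition at $t=1$ yields an expression $\sqrt{\mu}\cosh(\sqrt{\mu}) + T\lambda_k \sinh(\sqrt{\mu}) = 0$ (with $\mu = T^2\lambda_k(\lambda_k+a_i)>0$), which has no solution since both terms are strictly positive; the degenerate case $\lambda_k + a_i = 0$ gives $\bar h_{i,k}(t) = At$ and the boundary condition forces $A = 0$. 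The only remaining case is $\lambda_k + a_i < 0$. By \eqref{Vass} we have $a_2 \ge 0$, so this forces $i=1$ and $\lambda_k < -a_1$.

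In this last case, setting $\omega_k := T\sqrt{\lambda_k(-a_1-\lambda_k)}$, the condition $\bar h_{1,k}(0) = 0$ yields $\bar h_{1,k}(t) = B\sin(\omega_k t)$, and the boundary condition at $t=1$ becomes
\[
B\bigl[\omega_k \cos(\omega_k) + T\lambda_k \sin(\omega_k)\bigr] = 0,
\]
which for nontrivial $B$ is exactly \eqref{kernel_eq} after dividing by $T\lambda_k \cos(\omega_k)$. The corresponding kernel element in the original variables is then obtained by multiplying by $\psi_{k}(x)$ and applying $\Xi$ to the column vector $[\sin(\omega_k t), 0]^T$, as stated. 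Finally I would note that the indexed union of these solutions over admissible $k_j$ is linearly independent because the $\psi_{k_j}$ are $L^2$-orthogonal in $x$, spanning $N(\Lc(T))$.

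The only mildly delicate step is ruling out nontrivial solutions in the first two sub-cases — in particular, checking that the Robin boundary condition $\bar h'(1) + T\lambda_k \bar h(1)=0$ is genuinely incompatible with nondecaying exponential profiles; this is however immediate from the strict positivity of $T, \lambda_k, \sinh(\sqrt{\mu}), \cosh(\sqrt{\mu})$, so no serious obstacle is expected.
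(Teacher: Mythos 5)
Your proof is correct and follows essentially the same route as the paper: invoke Lemma \ref{L_coeff} with $z=0$, decouple via $\Xi$, and analyze the resulting scalar Robin problems on $(0,1)$. You are merely more explicit than the paper in ruling out the cases $\lambda_k + a_i \ge 0$ (the paper dispatches this with ``it is possible to check''), which is a welcome elaboration but not a different argument.
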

Note that the set of $\lambda_{k_j}$ satisfying \eqref{kernel_eq} is finite, since $\lambda_k \to \infty$ as $k \to \infty$. 

\begin{proof} By Lemma \ref{L_coeff}, $\Lc(T)[h] = 0$ if and only if, for all $k \ge 0$, $i=1,2$,
\begin{equation}\label{sys9}
\begin{cases}
h_{i,k}''(t) = T^2 \lambda_k^2 h_{i,k}(t) + T^2  \lambda_k [\partial_{m_1} V_i(1, 1) h_{1,k}(t) + \partial_{m_2} V_i(1, 1) h_{2,k}(t) ] & \text{in $(0,1)$,} \\
h_{i,k}(0)=0, \quad h'_{i,k}(1) + T \lambda_k h_{i,k}(1)  = 0.
\end{cases}
\end{equation}
We use the linear transformation on $\R^2$ induced by $\Xi$ to decouple the system: the coefficients $\bar h_k$ associated to $\bar h = \Xi^{-1} h$ satisfy
\begin{equation}\label{sys95}
\begin{cases}
\bar h_{i,k}''(t) = T^2 \lambda_k (\lambda_k +a_i) \bar h_{i,k}(t) & \text{in $(0,1)$,} \\
\bar h_{i,k}(0)=0, \quad \bar h'_{i,k}(1) + T \lambda_k \bar h_{i,k}(1)  = 0.
\end{cases}
\end{equation}
For any $i,k$, this second order equation does not have non-trivial solutions if $\lambda_k + a_i \ge 0$, so $\bar h_{2,k} \equiv 0$ for all $k$. On the other hand, it is possible to check that $\bar h_{1,k_{j}}(t) = A \sin(\omega \, t) + B\cos(\omega \, t)$ is a non-trivial solution if and only if $\omega = T\sqrt{\lambda_{k_j}(-a_1 - \lambda_{k_j})}$, $B = 0$ and \eqref{kernel_eq} holds.
\end{proof}

\begin{lem}\label{rankL} $z\in R(\Lc(T))$ if and only if
\[
\int_0^1 [\Xi^{-1}_{11} z_{1,k_j}(t) + \Xi^{-1}_{12}z_{2,k_j}(t)] \sin \left(T\sqrt{\lambda_{k_j}(-a_1 - \lambda_{k_j})}\, t\right) dt = 0
\]
for all $\lambda_{k_j}$ satisfying \eqref{kernel_eq}.
\end{lem}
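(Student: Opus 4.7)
The plan is to translate the question ``$z \in R(\Lc(T))$?'' into a Fredholm-alternative statement for an uncoupled family of scalar two-point boundary value problems — one for each Fourier mode $k$ — and then read off the orthogonality condition from the kernel already computed in Lemma~\ref{whoisker}. Precisely, by Lemma~\ref{L_coeff}, solving $\Lc(T)[\tilde h] = z$ in $X$ is equivalent to finding $h = \tilde h - z$ whose Fourier coefficients satisfy
\[
h_k'' = T^2 \lambda_k^2 h_k + T^2 \lambda_k\, JV \cdot (h_k + z_k), \quad h_k(0)=0, \ h'_k(1) + T\lambda_k h_k(1) = 0.
\]
Applying the constant change of variable $\bar h_k = \Xi^{-1} h_k$, $\bar z_k = \Xi^{-1} z_k$, the diagonalization \eqref{diagonal} (used exactly as in \eqref{diff_zero_xi}) splits this into the two scalar problems
\[
-\bar h_{i,k}'' + T^2 \lambda_k(\lambda_k + a_i)\bar h_{i,k} = -T^2 \lambda_k a_i\, \bar z_{i,k}, \quad \bar h_{i,k}(0)=0, \ \bar h'_{i,k}(1) + T\lambda_k \bar h_{i,k}(1)=0,
\]
for $i=1,2$ and every $k \ge 0$.

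Next I would invoke the Fredholm alternative for each scalar BVP. A one-line integration by parts shows that $L u := -u'' + cu$ with the above boundary conditions is self-adjoint in $L^2(0,1)$: the Dirichlet condition at $t=0$ kills the left endpoint contribution, while the Robin condition at $t=1$ produces the boundary term $T\lambda_k u(1) v(1)$, symmetric in $u,v$. Hence the scalar BVP is solvable if and only if its right-hand side is $L^2(0,1)$-orthogonal to $\ker L$. By the analysis already carried out in Lemma~\ref{whoisker}, $\ker L$ is trivial both when $i = 2$ (since $\lambda_k + a_2 \ge 0$) and when $i = 1$ with $\lambda_k$ outside the resonant set; and for $i = 1$ with $\lambda_k = \lambda_{k_j}$ satisfying \eqref{kernel_eq}, $\ker L$ is one-dimensional, spanned by $\phi_{k_j}(t) := \sin\!\bigl(T\sqrt{\lambda_{k_j}(-a_1-\lambda_{k_j})}\, t\bigr)$. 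The only nontrivial solvability conditions are therefore that $-T^2 \lambda_{k_j} a_1\, \bar z_{1,k_j}$ be $L^2$-orthogonal to $\phi_{k_j}$; cancelling the nonzero factor $-T^2 \lambda_{k_j} a_1$ and using $\bar z_{1,k_j} = \Xi^{-1}_{11} z_{1,k_j} + \Xi^{-1}_{12} z_{2,k_j}$ yields exactly the displayed conditions.

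For necessity I would proceed in reverse: given $\tilde h \in X$ with $\Lc(T)[\tilde h] = z$, multiplying the diagonalized ODE for $\bar h_{1,k_j}$ by $\phi_{k_j}$ and integrating by parts — using that $\phi_{k_j} \in \ker L$ together with the compatible boundary conditions — produces the orthogonality relation at once.

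The step I expect to be the main technical obstacle, rather than the algebraic Fredholm argument, is the reassembly of the countably many modal solutions into a genuine element of $X \subset [C^{4+\alpha, 2+\alpha/2}(Q)]^2$. For the finitely many resonant modes $\lambda_{k_j}$ the solution is defined up to a free kernel summand, while for all remaining $k$ the scalar BVP is uniquely solvable with quantitative H\"older estimates that decay as $\lambda_k \to \infty$ thanks to coercivity of $L$. Summing the series gives $h$ solving the linear parabolic system obtained by reconstituting \eqref{diff_zero}, and standard Schauder theory — already used in Lemma~\ref{compcont} — upgrades $h$, and hence $\tilde h = h + z$, to $[C^{4+\alpha,2+\alpha/2}(Q)]^2$, confirming $\tilde h \in X$ and closing the equivalence.
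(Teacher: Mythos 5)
Your proof is correct and follows essentially the same route as the paper: diagonalize via $\Xi$, reduce mode by mode to scalar two-point boundary-value problems, and apply the Fredholm alternative to the resulting self-adjoint scalar operators to read the orthogonality conditions off the kernel already identified in Lemma~\ref{whoisker}. The only cosmetic difference is that the paper phrases the Fredholm step through the compact self-adjoint solution operator $K$ (applying the alternative to $I-K$ and using $K[\phi_{k_j}]=\phi_{k_j}$) rather than directly through the unbounded differential operator, and your closing remark on reassembling the countably many modal solutions into an element of $X$ makes explicit a step the paper leaves implicit.
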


\begin{proof} Denote by $K = K_{a,k,T}$ the inverse of the one-dimensional Laplacian on $(0,1)$ with Robin conditions, i.e. $u = K[f]$ if and only if (in the weak sense)
\[
u''(t) = T^2 (\lambda_k + a_1)  \lambda_k f(t) \quad \text{in $(0,1)$,} \qquad u(0)=0, \quad u'(1) + T \lambda_k u(1)  = 0.
\]
Note that this operator is bounded, linear, self-adjoint and compact on $L^2((0,1))$.

 By Lemma \ref{L_coeff}, $z \in R(\Lc(T))$ if and only if there exists $h $ that satisfies
\[
\begin{cases}
h_k''(t) = T^2 \lambda^2_k h_k(t) + T^2 \lambda_k \, JV \cdot (h_k(t) + z_k(t)) & \text{in $(0,1)$,} \\
h_k(0)=0, \quad h'_k(1) + T \lambda_k h_k(1)  = 0
\end{cases}
\]
for all $k$. Passing to the transformed variables $\bar h$, $\bar z$, the system reads
\[
\begin{cases}
\bar h_{i,k}''(t) = T^2 (\lambda_k + a_i) \lambda_k \bar h_{i,k}(t) + T^2 \lambda_k a_i \bar z_{i,k}(t) & \text{in $(0,1)$,} \\
\bar h_{i,k}(0)=0, \quad \bar h'_{i,k}(1) + T \lambda_k \bar h_{i,k}(1)  = 0
\end{cases}
\]
The set of equations with $i=2, k \ge 0$ has solutions $\bar h_{2,k}$, since $\lambda_k + a_2 \ge 0$. For $i=1$ and any fixed $k$, the problem can be restated in finding $\bar h_{1,k}$ so that
\[
\bar h_{1,k} =  K[\bar h_{1,k}] + \frac{a_1}{(\lambda_k + a_1)} K[\bar z_{1,k}],
\]
that is possible if and only if $K[\bar z_{1,k}] \in R(I - K)$. By the Fredholm alternative, $R(I - K) = N(I - K)^\perp$. One then verifies that $N(I - K)$ is made up of solutions to \eqref{sys95}, so $K[\bar z_{1,k}] \in R(I - K)$ if and only if
\begin{multline*}
0 = \int_0^1 K[\bar z_{1,k}](t) \sin \left(T\sqrt{\lambda_{k_j}(-a_1 - \lambda_{k_j})}\, t\right) dt = \\ \int_0^1 \bar z_{1,k}(t) K\left[\sin \left(T\sqrt{\lambda_{k_j}(-a_1 - \lambda_{k_j})}\, \cdot\right)\right](t) dt = \int_0^1  \bar z_{1,k}(t) \sin \left(T\sqrt{\lambda_{k_j}(-a_1 - \lambda_{k_j})}\, t\right) dt,
\end{multline*}
whenever $\lambda_{k_j}$ satisfies \eqref{kernel_eq} (otherwise $K[\bar z_{1,k}] \in R$ automatically), that concludes the proof.

\end{proof}

\subsection{Global bifurcation}

We are ready to prove the main result of this section, concerning global bifurcation for \eqref{MFG1}. The proof of Theorem \ref{glob_bifo_mfg} will directly follow by going back to the unknowns $u,m$.

\begin{thm}\label{glob_bifo} Suppose that $\lambda_1$ is a simple eigenvalue, $\lambda_1 < -a_1 < \lambda_2$, and that $T^* > 0$ satisfies
\begin{equation}\label{Tcond}
\tan \left(T^*\sqrt{\lambda_{1}(-a_1 - \lambda_{1})}\right) = -\sqrt{\frac{-a_1- \lambda_{1}}{\lambda_{1}}}.
\end{equation}

Then, $(0,T^*) \in \mathcal S$. Let $\mathcal C$ be the connected component of $\mathcal S$ to which $(0,T^*)$ belongs. Then
\begin{itemize}
\item[{\it i)}] $\mathcal C$ contains some $(0,T_1)$, where $T^* \neq T_1$, or
\item[{\it ii)}] $\mathcal C$ is unbounded.
\end{itemize}
\end{thm}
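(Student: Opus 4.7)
The plan is to apply Theorem \ref{global_bif} to $\Fc(\sm,T)=\sm-\Gc(\sm,T)$ at the candidate point $(0,T^*)$. Compactness of $\Gc$ and continuous differentiability in $T$ of $\Lc(T):=D_\sm\Fc(0,T)$ are supplied by Lemma \ref{compcont}, so what remains is to verify (a) that zero is a geometrically simple, isolated eigenvalue of $\Lc(T^*)$, and (b) the transversality condition $\Lc'(T^*)[v_0]\notin R(\Lc(T^*))$, where $v_0$ spans $N(\Lc(T^*))$.

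For (a), Lemma \ref{whoisker} characterises $N(\Lc(T^*))$ through the Neumann eigenvalues $\lambda_{k_j}\in(0,-a_1)$ that satisfy \eqref{kernel_eq} at $T=T^*$. The bracketing hypothesis $\lambda_1<-a_1<\lambda_2$ leaves $k_j=1$ as the only option, and simplicity of $\lambda_1$ then reduces the kernel to the one-dimensional space spanned by
\[
v_0(x,t)=\psi_1(x)\,\Xi\cdot\bigl[\sin(\alpha t),\,0\bigr]^T,\qquad \alpha:=T^*\sqrt{\lambda_1(-a_1-\lambda_1)}.
\]
Isolation of zero in the spectrum of $\Lc(T^*)=I-D_\sm\Gc(0,T^*)$ is automatic by Fredholm theory since $D_\sm\Gc(0,T^*)$ is compact.

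For (b) I apply Lemma \ref{whoisLp} with $\tilde h=v_0$ (so $z=0$) and pass to diagonalised coordinates $\bar w=\Xi^{-1}w$. Every mode $\bar w_{i,k}$ with $(i,k)\ne(1,1)$ then solves a source-free BVP with homogeneous data and is identically zero (for $(i,k)=(2,1)$ this uses $\lambda_1+a_2>0$, and for $k\ge 2$ it uses $\lambda_k>-a_1$, so that the homogeneous ODE has exponential fundamental solutions incompatible with the Robin condition at $t=1$). Only the $(1,1)$-mode, which I still denote $\bar w$, is nontrivial and solves
\[
\bar w''-T^{*2}\lambda_1^2\bar w=\tfrac{2\alpha^2}{T^*}\sin(\alpha t),\qquad \bar w(0)=0,\qquad \bar w'(1)+T^*\lambda_1\bar w(1)=\lambda_1\sin\alpha.
\]
By Lemma \ref{rankL}, transversality reduces to $I:=\int_0^1\bar w(t)\sin(\alpha t)\,dt\ne 0$. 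I would multiply the ODE by $\sin(\alpha t)$, integrate by parts twice, and use the identities $\alpha^2+T^{*2}\lambda_1^2=-T^{*2}\lambda_1 a_1$ and $\alpha\cos\alpha+T^*\lambda_1\sin\alpha=0$ (the latter being \eqref{Tcond} itself). The boundary terms then collapse and the trigonometric integral on the right-hand side simplifies, yielding the closed form $I=\alpha^2/(T^{*3}\lambda_1 a_1)$, which is nonzero because $a_1<0<\lambda_1$.

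With (a) and (b) established, Theorem \ref{global_bif} delivers $(0,T^*)\in\mathcal S$ together with the dichotomy \textit{i)}/\textit{ii)}. The main obstacle is the transversality computation: since the forcing $\sin(\alpha t)$ coincides with the kernel element, one must verify that the inhomogeneous Robin datum $\lambda_1\sin\alpha$ produces a genuine solvability obstruction rather than being absorbed by the source. The resulting nonzero quantity $\alpha^2/(\lambda_1 a_1)$ is precisely the infinitesimal reflection, at the level of the $T$-derivative, of $T^*$ being a simple root of the transcendental equation \eqref{Tcond}.
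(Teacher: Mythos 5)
Your proof follows the paper's strategy closely: you invoke Theorem \ref{global_bif}, identify the kernel via Lemma \ref{whoisker} (using the bracketing hypothesis and simplicity of $\lambda_1$ to pin down the unique kernel element), and verify transversality by computing the projection of $\Lc'(T^*)[v_0]$ onto the kernel via Lemma \ref{whoisLp}. Your closed form $I=\alpha^2/(T^{*3}\lambda_1 a_1)$ is correct and agrees with the paper's value $-(\lambda_1+a_1)/(T^* a_1) < 0$, so the transversality condition holds.

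There is, however, one technical gap. Theorem \ref{global_bif} requires $\Gc$ to be compact as a map $X \times \R \to X$, but Lemma \ref{compcont} only gives compactness on $X \times (0,\infty)$: the operator is not even defined for $T\le 0$, and the argument there requires $T$ bounded away from zero. Your opening sentence says compactness is ``supplied by Lemma \ref{compcont}'', but this does not suffice to apply the global bifurcation theorem as stated. The paper resolves this by replacing $T$ with $\varphi(T)$, where $\varphi:\R\to(0,\infty)$ is a smooth, positive, increasing function with $\varphi(T)=T$ for $T>\overline T$, the small-horizon uniqueness threshold of Lemma \ref{unique}. Since by that lemma no non-trivial solutions exist for $T\le\overline T$, the modified operator has the same non-trivial fixed-point set and the same bifurcation points, while now being compact on all of $X\times\R$. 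Without this (or an equivalent) modification, the invocation of Theorem \ref{global_bif} is not rigorous.

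Two smaller remarks: your argument that $\bar w_{2,1}$ vanishes is correct but does not actually use $\lambda_1+a_2>0$ (the forcing $\bar h^0_{2,1}$ is already zero, so the ODE is homogeneous with coefficient $T^{*2}\lambda_1^2$), and the vanishing of the modes $(i,k)\neq(1,1)$ is in any case irrelevant because Lemma \ref{rankL} only tests $\bar w_{1,1}$ against the kernel. These are cosmetic, not errors.
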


\begin{rem} If $T$ does not satisfy \eqref{Tcond}, the linearized system associated to $(\bar u, \bar m)$ does not have non-trivial solutions; using the jargon introduced in \cite{BriCar}, $(\bar u, \bar m)$ is then a {\it stable} solution, so it is in particular isolated.
\end{rem}

\begin{proof} We will apply Theorem \ref{global_bif}. 
We have to check that zero is a geometrically simple (isolated) eigenvalue of $D_m \Fc(0,T^*)$ and the transversality condition \eqref{transversality}. The first assertion comes from Lemma \ref{whoisker}, since $\lambda_1$ is a simple eigenvalue and $\lambda_1 < -a_1 < \lambda_2$. Moreover, $N(D_m \Fc(0,T^*))$ is spanned by
\[
h^0(x,t) = \psi_{1}(x) \Xi \cdot \left[\sin \left(T\sqrt{\lambda_{1}(-a_1 - \lambda_{1})}\, t\right) , 0\right]^T.
\]

Since $D_m \Fc(0,T^*)[h^0] = 0$, by Lemma \ref{whoisLp} we have that $w = \Lc'(T^*)[h^0]$ if and only if 
\begin{equation}\label{sys11}
\begin{cases}
-w_k''(t) = 2T^*\lambda_k^2 h^0_k + 2T^*\lambda_k JV \cdot h^0_k - (T^*)^2 \lambda_k^2 w_k & \text{in $(0,1)$,} \\
w_k(0)=0, \quad w'_k(1) + T^* \lambda_k w_k(1)  = \lambda_k  h^0_k(1).
\end{cases}
\end{equation}
that is, via the linear transformation $\Xi$,
\begin{equation}\label{sys115}
\begin{cases}
-\bar w_{i,k}''(t) = 2T^*(\lambda_k + a_i)\lambda_k \bar h^0_{i,k} - (T^*)^2 \lambda_k^2 \bar w_{i,k} & \text{in $(0,1)$,} \\
\bar w_{i,k}(0)=0, \quad \bar w'_{i,k}(1) + T^* \lambda_k \bar w_{i,k}(1)  = \lambda_k \bar h^0_{i,k}(1).
\end{cases}
\end{equation}
We focus on the set of equations with $i=1$. Note that $\bar h^0_{1,k} \equiv 0$ for all $k \ge 2$, so $ \bar w_{1,k} \equiv 0$ for all $k \ge 2$. On the other hand, recall that $\eta(t) := \bar h^0_{1,1}(t) = \sin (T^*\sqrt{\lambda_1(-a_1 - \lambda_1)}\, t)$ satisfies
\begin{equation}\label{sys12}
\begin{cases}
\eta''(t) = (T^*)^2 (\lambda_1 + a_1)  \lambda_1 \eta(t) & \text{in $(0,1)$,} \\
\eta(0)=0, \quad \eta'(1) + T^* \lambda_1 \eta(1)  = 0,
\end{cases}
\end{equation}
We now multiply the equation in \eqref{sys115} (with $i, k=1$) by $\eta$, the equation in \eqref{sys12} by $\bar w_{1,1}$, integrate by parts on $(0,1)$ and sum to get
\[
-\bar w_{1,1}' \eta(1) + \bar w_{1,1}' \eta(0) + \bar w_{1,1} \eta'(1) - \bar w_{1,1} \eta'(0) = 2T^*(\lambda_1 + a_1)\lambda_1 \int_0^1  \eta^2 dt + (T^*)^2\lambda_1 a_1 \int_0^1 \bar w_{1,1} \eta dt.
\]
Using then the boundary conditions,
\[
-(T^*)^2\lambda_1 a_1 \int_0^1 \bar w_{1,1} \eta dt = 2T^*(\lambda_1 + a_1)\lambda_1 \int_0^1  \eta^2 dt +  \lambda_1 \eta^2(1).
\]
Evaluating the last expression, by \eqref{Tcond} one has
\[
-(T^*)^2 a_1 \int_0^1 \bar w_{1,1}(t) \sin (T^*\sqrt{\lambda_1(-a_1 - \lambda_1)}\, t) dt = T^*(\lambda_1 + a_1 ) < 0,
\]
so by Lemma \ref{rankL}
\[
\Lc'(T^*)[h^0] = w \notin R(\Lc(T^*)),
\]
that is the sufficient condition for $T^*$ to be a bifurcation point.

Finally, we observe that $\Gc$ is a compact operator on $X \times (0,\infty)$, while Theorem \ref{global_bif} requires it to be compact on $X \times \R$. We may circumvent this as follows. By Lemma \ref{unique}, \eqref{Gdef} has only the trivial solution for all $T \le \overline T$. In \eqref{Gdef}, we may then replace $T$ by $\varphi(T)$, where $\varphi : \R \to (0,\infty)$ is a positive  and increasing smooth function such that $\varphi(T) = T$ for all $T > \overline T$, so $\Gc$ becomes compact on $X \times \R$ and coincides with its original definition for all $T > \overline T$. Since there are no non-trivial solutions whenever $\varphi(T) \le \overline T$ (and no bifurcation points, i.e. $T^* > \overline T$), the previous analysis holds, and Theorem \ref{global_bif} applies. Moreover, if $(\sm, \tau)$ is a non-trivial solution of $\sm = \Gc(\sm, \tau)$, then $\varphi(\tau) > \overline T$, so $\varphi(\tau) = \tau$ and $(\sm, \tau)$ is really a solution to \eqref{Gdef}.

\end{proof}

\begin{figure}
\centering
\begin{tikzpicture}
      \draw[->] (-.2,0) -- (6.2,0) node[right] {$T$};
      \draw[->] (0,-.2) -- (0,2) node[left] {$\|(u,m)\|$};
       \draw[-, dashed] (0,.5) -- (5.7,.5) node[right] {$(\bar u, \bar m)$};
       \fill[red] (1.5,.5) circle[radius=1.5pt] node[below] {$T^*_1$};
      \draw[domain=.5:2,smooth,variable=\y,blue] plot ( {1.5+1.3*(\y-.5)*(\y)*(sin(660*(\y-.5)))^2 }, {\y} );%scale=0.5,
      \node at (5.2,1.4) {$\mathcal C_2$};
      \fill[red] (2.8,.5) circle[radius=1.5pt] node[below] {$T^*_2$};
     \fill[pattern=north west lines, pattern color=brown] (0,0) rectangle (.7,2);
    \draw	(.7,0) node[anchor=north] {$\overline T$};
     \draw[domain=0:1,smooth,variable=\t,blue] plot ( { 4+1.2*cos(180*\t*\t) }, { .5+.7*sin(180*\t) } );%scale=0.5,
     \node at (5.7,2) {$\mathcal C_1$};
     \fill[red] (5.2,.5) circle[radius=1.5pt] node[below] {$T^*_3$};
\end{tikzpicture}
\caption{\footnotesize A sample bifurcation diagram: the horizontal dashed line represents trivial solutions $(\bar u, \bar m)$, that exist for every $T > 0$. From the line of trivial solutions, branches (continua) $\mathcal C_n$ of non-trivial solutions emanate at certain values $T^*_n$ (in red). Theorem \ref{glob_bifo} states that a dichotomy occurs: {\it i)} branches may connect two different bifurcation points (as for $\mathcal C_2$), or {\it ii)} branches are unbounded (as for $\mathcal C_1$). Note that unbounded branches must contain sequences with $T \to \infty$ (see Proposition \ref{unb_bra}). Moreover, no non-trivial branch can enter the filled rectangle $X \times (0,\overline T)$. Indeed, in view of Lemma \ref{unique}, any non-trivial solution exists for $T > \overline T$ only. The reader may compare this sample diagram with diagrams in Figures \ref{fig_2branches}, \ref{fig_5branches}, \ref{fig_sch_br}, that have been obtained by numerical explorations of \eqref{MFG0}.  }\label{bif_ex}
\end{figure}
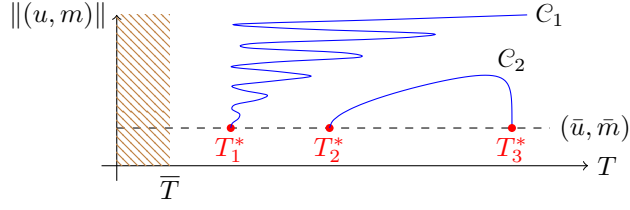

\begin{rem}\label{mfg_loc} It is straightforward to check that $\Gc$ is $C^2(X \times \R, X)$, so by Remark \ref{param_loc}, $\mathcal C$ is a continuously differentiable curve in a neighbourhood of $(0,T^*)$, parametrised by
\[
T = T^* + O(\eps), \quad (\sm_1, \sm_2) = (0,0) + \eps (\sm^*_1, \sm^*_2) + o(\eps),
\]
where $(\sm^*_1(x,t), \sm^*_2(x,t)) = \psi_{1}(x) \Xi \cdot \left[\sin \left(T\sqrt{\lambda_{1}(-a_1 - \lambda_{1})}\, t\right) , 0\right]^T$.

Going back to the unknowns $(m_1, m_2)$, this means that \eqref{MFG0} has a branch of nontrivial solutions
\[
T = T^* + O(\eps), \quad (m_1(x,s), m_2(x,s)) = (1,1) + \eps \psi_{1}(x) \Xi \cdot \left[\sin \left(\sqrt{\lambda_{1}(-a_1 - \lambda_{1})}\, s\right) , 0\right]^T + o(\eps).
\]

Note that, if $\sigma \neq 1$, the same arguments lead to bifurcation times $T^*$ that satisfy
\[
\tan \left(T^* \sqrt{\lambda_{1}(-a_1 - \sigma^2  \lambda_{1})}\right) = -\frac{1}{\sigma}\sqrt{\frac{-a_1- \sigma^2 \lambda_{1}}{\lambda_{1}}},
\]
with branches having local parametrisation
\[
(m_1(x,s), m_2(x,s)) = (1,1) + \eps \psi_{1}(x) \Xi^{-1} \left[\sin \left({\sqrt{\lambda_{1}(-a_1 - \sigma^2 \lambda_{1})}}\, s \right) , 0\right]^T + o(\eps).
\]

\end{rem}

We are now in the position to prove the main global bifurcation result of this paper.

\begin{proof}[Proof of Theorem \ref{glob_bifo_mfg}] The statement directly follows by Theorem \ref{glob_bifo} and Remark \ref{mfg_loc}, noting that $(u, m) \mapsto (\su, \sm)$ is a continuous bijection on $X \times (0, \infty)$. The bifurcation condition \eqref{TC} is simply derived by taking the inverse of $z \mapsto \tan z$ in \eqref{Tcond}. \end{proof}

Finally, by parabolic regularity we can show that if $\mathcal C$ is an unbounded branch, that is if $\mathcal C$ does not connect two different bifurcation points, then it must be unbounded in the direction $T \to +\infty$, in the following sense (see also Figure \ref{bif_ex}).

\begin{prop}\label{unb_bra} Let $\mathcal C$ be as in Theorem \ref{glob_bifo}. Then,
\begin{itemize}
\item[{\it i)}] $\mathcal C$ contains some $(0,T_1)$, where $T^* \neq T_1$, or
\item[{\it ii)}] $\mathcal C$ contains sequences with $T \to +\infty$.
\end{itemize}
\end{prop}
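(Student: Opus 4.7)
The starting point is the dichotomy of Theorem~\ref{glob_bifo}: either alternative~(i) already holds, in which case we are done, or $\mathcal C$ is unbounded in $X\times\R$. The plan is to sharpen the second alternative by showing that unboundedness of $\mathcal C$ forces the parameter $T$ itself to be unbounded along $\mathcal C$. Since Lemma~\ref{unique} confines every nontrivial solution to the half-space $T>\overline T$, it suffices to rule out the existence of a sequence $(\sm_j,T_j)\subset\mathcal C$ with $T_j\in[\overline T,M]$ for some fixed $M<\infty$ and $\|\sm_j\|_X\to\infty$; under such an exclusion the only way $\mathcal C$ can escape every bounded set of $X\times\R$ is to have $T_j\to+\infty$.

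The heart of the argument is therefore an a priori estimate: for each $M>\overline T$ there is a constant $K(M)$ with $\|\sm\|_X\le K(M)$ for every classical solution $(\su,\sm,T)$ of \eqref{MFG1} with $T\in[\overline T,M]$. I would derive it in two stages. First, by \eqref{Vass} the functions $V_i$ are globally bounded, so the right-hand sides of the HJB equations in \eqref{MFG1} lie in $L^\infty$ uniformly in the solution. The quadratic Hamiltonian allows the Hopf-Cole substitution $w_i=e^{-\su_i/2}$, which turns each backward HJB into the linear backward heat equation
\[
\tfrac{1}{T}\partial_t w_i+\Delta w_i=\tfrac{1}{2}\bigl(V_i(1+\sm_1,1+\sm_2)-V_i(1,1)\bigr)\,w_i,
\]
with Neumann boundary condition and terminal datum $w_i(\cdot,1)=1$. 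The maximum principle yields uniform two-sided bounds $0<c\le w_i\le C$, and parabolic $L^p$ and De~Giorgi-Nash estimates for linear equations with bounded potential provide a uniform H\"older bound on $\nabla w_i$. Consequently $\su_i=-2\log w_i$ is uniformly bounded in a parabolic H\"older space, with constants depending only on $\overline T$, $M$ and $\|V_i\|_\infty$, but crucially \emph{not} on $\|\sm\|_X$.

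Second, the Fokker-Planck equations for $\sm_i$ in \eqref{MFG1} are linear in $\sm_i$ with H\"older drift $\nabla\su_i$, zero initial datum, and H\"older source $-\Delta\su_i$ (itself recovered from the HJB once $\nabla\su_i$ is controlled). The parabolic regularity results already invoked in the proof of Lemma~\ref{compcont} (\cite[Thms.~XIII.13.15--16 and IV.4.30--31]{Lieber}) then give a uniform H\"older bound on $\sm_i$; a standard bootstrap using $V_i\in C^\infty$ promotes the estimate up to the $X$-norm $C^{4+\alpha,2+\alpha/2}$. With this a priori bound in hand, the set $\mathcal C\cap(X\times[\overline T,M])$ is bounded in $X\times\R$, so unboundedness of $\mathcal C$ forces a sequence with $T_j\to+\infty$, which is exactly alternative~(ii). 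The main obstacle is the first stage: the H\"older estimate for $\su_i$ must depend only on the $L^\infty$-norm of the HJB right-hand side and not on any higher regularity of $\sm$, since otherwise the bootstrap cannot be started. The quadratic Hamiltonian, via Hopf-Cole linearisation with a bounded potential, is precisely what makes this possible; in a more general setting one would instead need a Bernstein-type gradient estimate whose constants do not blow up along the branch, which is more delicate.
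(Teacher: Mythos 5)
Your argument is correct and follows the same overall structure as the paper's proof: both reduce to showing that boundedness of $T$ along $\mathcal C$ forces boundedness of $\mathcal C$ in $X\times\R$, via a uniform a priori estimate that exploits the global boundedness of $V_i$ assumed in \eqref{Vass}, thereby contradicting the unboundedness alternative of Theorem~\ref{glob_bifo}. The one genuine difference is technical: the paper obtains the uniform H\"older bound on $\nabla\su_i$ by directly invoking Lieberman's gradient estimates for quasilinear parabolic equations (\cite[Thms.~XIII.13.15--16]{Lieber}, as in Lemma~\ref{compcont}), whereas you perform a Hopf--Cole substitution $w_i=e^{-\su_i/2}$, reduce to a linear heat equation with a bounded potential, and then use the maximum principle together with parabolic $L^p$/Schauder theory; both routes yield the same uniform estimate, but yours is more explicit and exploits the exact quadratic structure of the Hamiltonian, while the paper's citation-based approach extends more readily to the general quadratic-growth Hamiltonians of Remark~\ref{addrem2}.
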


\begin{proof} By contradiction, suppose that both {\it i)} and {\it ii)} are false, in particular that there exists $\widehat T > 0$ such that for all $(\sm, T) \in \mathcal C$, $T \le \widehat T$. Since $ \mathcal C$ consists of non-trivial solutions, it also holds true that $T \in (\overline T, \widehat T)$ by Lemma \ref{unique}. Since $\sm = \Gc(\sm, T)$, arguing as in the first part of the proof of Lemma \ref{compcont} we conclude by parabolic regularity that $\sm$ belongs to a bounded set of $X$ (recall that \eqref{Vass} is in force, in particulart $V_i$ are bounded on $\R^2$), so $ \mathcal C$ is bounded in $X \times \R$. This contradicts Theorem \ref{glob_bifo}, since if {\it i)} is false, then $ \mathcal C$ must be unbounded. \end{proof}

\subsection{Additional remarks}

\begin{rem}\label{addrem1} \textit{On the assumption $\sigma^2 \lambda_1 < -a_1 < \sigma^2 \lambda_2$ of Theorem \ref{glob_bifo_mfg}}.

As we explained earlier, bifurcation times $T^*$ are so that the linearized system \eqref{diff_zero} has a one dimensional vector space of non-trivial solutions, that is, $N(\Lc(T))$ is spanned by a single element. In Lemma \ref{whoisker} it is proved that this happens whenever $T$ satisfies \eqref{kernel_eq}; if $\sigma^2 \lambda_1 < -a_1 < \sigma^2 \lambda_2$, this equation has solutions if only if $k_j = 1$. Suppose now that
\[
-a_1 \neq  \sigma^2 \lambda_k \quad \forall k \in \mathbb{N},
\]
and, if $K$ is such that $\sigma^2 \lambda_K < -a_1 < \sigma^2 \lambda_{K+1}$, then
\[
\lambda_1, \ldots, \lambda_K \text{ are simple.}
\]

Let $\sigma = 1$ for simplicity. Then, for any $k = 1, \ldots, K$ fixed, \eqref{kernel_eq} has a family of solutions of the form
\[
T^*_{n,k} = \frac{1}{\sqrt{\lambda_{k}(-a_1 -\lambda_{k})}}\left[n\pi - \arctan\left(\sqrt{\frac{-a_1- \lambda_{k}}{\lambda_{k}}}\right)\right].
\]
Of course, $T^*_{n,k} \neq T^*_{m,k}$ for all $n \neq m$. If it is also true that for some $(n, k)$, $T^*_{n,k} \neq T^*_{m,j}$ for all $j = 1, \ldots, K$, $j \neq k$, $m \ge 1$, then $T^*_{n,k}$ becomes a bifurcation point. Indeed, in this case $N(\Lc(T))$ is still one dimensional, and the arguments and conclusions of Theorem \ref{glob_bifo} hold. In particular, there exists a continuum $C_{n, k}$ branching off from the trivial solution which is either unbounded or meets again the trivial solution in another bifurcation point. An example of this scenario will be discussed later (see Section \ref{exp2}).

\end{rem}

\begin{rem}\label{addrem2} \textit{The non-quadratic case.}

The same bifurcation methods apply for more general MFG systems with non-quadratic Hamiltonians $H_i (x, p)$ of the form
\begin{equation}\label{MFGnq}
\begin{cases}
-\partial_t u_i - \sigma \Delta u_i + H_i (x, \nabla u_i) = V_i(m_1, m_2),& \text{in $Q_T, \, i=1,2$,} \\
\partial_t m_i - \sigma \Delta m_i - \diverg(\nabla_p H_i (x, \nabla u_i) \, m_i)  = 0,
\end{cases}
\end{equation}
under the assumption that for some $c_H, \kappa_{1,2} > 0$ and for all $x \in \Omega$,
\[
\begin{split}
& \textit{i) } \nabla_p H_i(x, 0) = 0, \\
&  \textit{ii) } D^2_p H_i(x, 0) = \kappa_i I, \\
&  \textit{iii) }  |H_i(x, p)| \le c_H (1 + |p|^2), \, |p|^2 |\nabla_p H_i(x, p)| +  |\nabla_x H_i(x, p)|\le c_H (1 + |p|^3) \quad \forall p.
\end{split} 
\]
While  \textit{iii)} is related to a priori estimates needed to have $\Gc$ well-defined and regular, \textit{i)} - \textit{ii)} imply that the linearized system is identical to the linearized system of a quadratic problem of the form \eqref{MFG0}. Indeed, the linearized operator $z = D_\sm \Gc(0, T)[h]$ associated to the linearization of \eqref{MFGnq} reads
\begin{equation}\label{diff_zero_nq}
\begin{cases}
- \partial_t v_i - \sigma T \Delta v_i = T[\partial_{m_1} V_i(1, 1) h_1 + \partial_{m_2} V_i(1, 1)h_2],& \text{in $Q$,} \\
\partial_t z_i -  \sigma T \Delta z_i - \kappa_i T  \Delta v_i  = 0, \\
\end{cases}
\end{equation}
and an identical analysis can be carried over with the additional parameters $k_i$. Let us discuss briefly the effect of these parameters in a simple model, i.e. a single population setting where $\kappa_1 = \kappa_2 = \kappa > 0$,
\[
V_1(m_1, m_2) = -a m_1, \qquad V_2(m_1, m_2) = 0, \qquad a > 0.
\]
Arguing as in the previous section, a necessary condition for $T_n$ to be a bifurcation point is that
\[
T_n(\kappa) = \frac{1}{\sqrt{\lambda_{1}(\kappa a - \sigma^2 \lambda_{1})}}\left[n\pi - \arctan\left(\frac{1}{\sigma}\sqrt{\frac{\kappa a- \sigma^2 \lambda_{1}}{\lambda_{1}}}\right)\right]
\]
If $\kappa$ is such that $\sigma^2 \lambda_1 < \kappa a < \sigma^2 \lambda_2$, we indeed obtain branches of non-trivial solutions. For this to happen, $k$ has to be bounded away from zero. If $k$ is large, one may argue as in Remark \ref{addrem1}, namely bifurcation of non-trivial solutions from $T_n(\kappa)$ shows up if some kind of non-resonance is guaranteed; note that, for any $n$ fixed, $T_n(\kappa) \to 0$ as $\kappa \to \infty$. In other words, one may find branches of non-trivial solutions for small time horizons as soon as $D^2 H_i(0)$ is large.

\end{rem}

\begin{rem}\label{addrem3} \textit{On the assumption \eqref{Vass}}.

In the same spirit of Remark \ref{addrem2}, what really matters for our bifurcation analysis is the linearization of \eqref{MFG0} around the trivial solution $(\bar u, \bar m)$. The analysis carried out in Section \ref{s_bif} is identical if one replaces \eqref{Vass} by 
\begin{equation}\label{Vass2}
\begin{split}
\bullet &\, \text{$V_i = V_i(x,m) \in C^\infty(\Omega \times (0,+\infty) \times (0,+\infty))$ are bounded}, \\
\bullet &\, \text{there exist $a_1 \le a_2$ with $a_1 < 0$, and an invertible $2 \times 2$ matrix $\Xi$ such that for all $x \in \Omega$}, \\
&
\begin{bmatrix}
a_1 &  0 \\
  0 &  a_2
\end{bmatrix}
= \Xi \cdot J_m V(x, 1, 1) \cdot \Xi^{-1}.
\end{split}
\end{equation}
Indeed, the linearized system does not change at all under the additional $x$-dependance of $V_i$ if the eigenvalues and eigenvectors of $J_m V(x, 1, 1)$ do not vary on $\Omega$, and $\Xi$ decouples the linearized system into two systems that can be treated as \eqref{sys95}. The cases of $J V(1,1)$ having a single eigenvalue with geometric multiplicity equal to one or complex eigenvalues are more delicate and require different treatments.

We note that adding an $x$-dependance does not change the qualitative behaviour of branches close to bifurcation points, but may alterate significantly the system as soon as $(u,m)$ differs from the trivial solution $(\bar u, \bar m)$.
\end{rem}

\begin{rem}\label{addrem4} \textit{More than two populations}.

The present work deals with two-population models, but the arguments presented could be adapted to more general systems of the form
\[
\begin{cases}
-\partial_t u_i - \sigma \Delta u_i + \frac{1}{2}|\nabla u_i|^2 = V_i(m_1, m_2, \ldots, m_M), \\
\partial_t m_i - \sigma \Delta m_i - \diverg(\nabla u_i\, m_i)  = 0, & \text{in $Q_T, i =1, \ldots, M$,}
\end{cases}
\]
as soon as $J V(1,\ldots,1)$ has at least one negative eigenvalue. This framework should be even more rich, and will be matter of future work.
\end{rem}

\section{Two applications}\label{s_appl}

In this section we consider two different models. For both models, we apply Theorem \ref{glob_bifo} to obtain the existence of branches of non-trivial solutions, with their explicit parametrization close to bifurcation times. We then present some numerical experiments to show how solutions behave along selected branches as $T$ varies. Such analysis has just the aim of observing the behaviour of solutions for particular choices of the parameters, and by no means is meant to give a general description of the qualitative properties of solutions to multi-population MFG systems. Still, the following results highlight different scenarios.

To simplify the numerical analysis, and better compare the theoretical and experimental sides, we restrict to space dimension $N = 1$, i.e.
\[
\Omega = (0,1).
\]
In this setting, Neumann eigenvalues and eigenvectors of $-\Delta$ are explicit, namely we have
\[
\lambda_k = (k\pi)^2 \qquad \psi_k(x) = \cos(k \pi x),
\]
and all the eigenvalues are simple.

Numerical solutions will be obtained by finite difference methods. We use the techniques described in \cite{ABC}, that rely on monotone approximations of the Hamiltonian and on a suitable weak formulation of the Fokker-Planck equation. Since \eqref{MFG0} couples forward and backward in time equations, it cannot be solved by merely marching in time, so we implement a Newton-Raphson method for the whole system on $Q_T$, mimicking the infinite dimensional equation $\sm = \Gc(\sm, T)$. For additional details we refer to \cite[Section 5] {ABC} and references therein. We mention that to get a precise approximation of \eqref{MFG0} close to bifurcation points, we need a space-time grid with rather small steps (see the discussion in Experiment 1); here, we will use a uniform $400 \times 400$ grid. Theorem \ref{glob_bifo_mfg} suggests that \eqref{MFG0} has a rich structure of solutions; the numerical method is indeed very sensitive to the initial guess $m^0$, that has to be properly chosen to select desired branches.

We finally mention that the radial case $\Omega = B_R$ could be treated in the same ways, both from the theoretical and numerical sides.

\subsection{A single population model with aggregation}\label{spa}

We consider a single population model where
\[
V_1(m_1, m_2) = -a \, g(m_1), \qquad V_2(m_1, m_2) = 0, \qquad a > 0,
\]
where $g(m)$ is a smooth increasing function such that $g(m) = m$ for all $m \in [0,M]$ (here $M > 1$ is fixed) and $g$ is bounded on $\R$ (we will see that for suitable $M >> 1$,  solutions have $m$ components such that $\|m\|_\infty \le M$ independently on $T$, so $u,m$ will really be solutions of the model problem with linear cost $V_1(m_1, m_2) = -a m_1$). The cost $V_2$ is trivial, hence we restrict our attention to the components $(u_1, m_1)$; here, the system is decoupled and the second population is not playing any significant role. The coupling $V_1(\cdot, m_2)$ is decreasing, namely players of the first population are attracted toward congested areas. Since
\[
JV := JV(1,1) = 
\begin{bmatrix}
 -a &  0 \\
  0 &  0
\end{bmatrix},
\]
we can rephrase Theorem \ref{glob_bifo_mfg} as follows:
\begin{thm}\label{glob_bifo_aggreg} Suppose that $(\sigma \pi)^2 < a < 4( \sigma \pi)^2$, and that $T^*_n$ satisfies
\[
T^*_n = -\frac{1}{\pi \sqrt{a- (\sigma \pi)^2} }\arctan\left( \frac{\sqrt{a- (\sigma \pi)^2}}{\sigma \pi} \right) + \frac{n}{\sqrt{a- (\sigma \pi)^2}}
\]
for some $n \in \mathbb{N}$. Then, $(\bar u, \bar m, T^*_n) \in S$. Let $C_n$ be the connected component of $S$ to which $(\bar u, \bar m,T^*_n)$ belongs. Then $C_n$ contains some $(\bar u, \bar m, \widetilde T)$, where $T^* \neq \widetilde T $, or $C_n$ is unbounded.

Finally, $C_n$ is a continuously differentiable curve in a neighbourhood of $T^*_n$, parametrized by
\begin{equation}\label{para}
T = T^*_n + O(\eps), \quad m_1(x,s) = 1 + \eps \cos(\pi x) \sin \left(\pi \sqrt{a- (\sigma \pi)^2} \, s\right) + o(\eps).
\end{equation}
\end{thm}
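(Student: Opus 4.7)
The plan is to recognize Theorem \ref{glob_bifo_aggreg} as a direct specialization of Theorem \ref{glob_bifo_mfg} to this specific two-population model, so my task reduces to verifying each hypothesis of the master theorem for the chosen data and then reading off the two explicit formulas.

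First, I would check that assumption \eqref{Vass} is in force. Since $g$ is smooth, bounded, and satisfies $g'(1) = 1$, both $V_1(m_1, m_2) = -a g(m_1)$ and $V_2 \equiv 0$ are $C^\infty$ and bounded on $\R^2$, and
\[
JV \;=\; \begin{bmatrix} -a g'(1) & 0 \\ 0 & 0 \end{bmatrix} \;=\; \begin{bmatrix} -a & 0 \\ 0 & 0 \end{bmatrix}.
\]
This matrix is already diagonal, so its eigenvalues are $a_1 = -a < 0$ and $a_2 = 0$, with opposite signs in the sense $a_1 < 0 \le a_2$ required by \eqref{Vass}, and the diagonalizing matrix $\Xi$ of \eqref{diagonal} can be chosen to be the $2\times 2$ identity.

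Second, for $\Omega = (0,1)$ the Neumann eigenvalues $\lambda_k = (k\pi)^2$ are all simple; in particular $\lambda_1 = \pi^2$ is, and the spectral inequality $\sigma^2 \lambda_1 < -a_1 < \sigma^2 \lambda_2$ translates exactly into $(\sigma\pi)^2 < a < 4(\sigma\pi)^2$, which is the standing hypothesis. I would then substitute these data into \eqref{TC} and \eqref{expa}. Inserting $\lambda_1 = \pi^2$ and $-a_1 = a$ in \eqref{TC} and pulling a $\pi$ out of both square roots yields the stated expression for $T^*_n$, and the global dichotomy on $C_n$ transfers verbatim from the master theorem. Because $\Xi = I$, the vector $\Xi \cdot [\sin(\cdot), 0]^T$ reduces to $[\sin(\cdot), 0]^T$, so the first component of \eqref{expa} becomes exactly \eqref{para}, while the $m_2$-component is identically $1$: indeed, $V_2 \equiv 0$ forces $u_2 \equiv 0$ via the HJB equation with zero terminal datum, after which $m_2$ solves the heat equation with Neumann conditions and constant initial datum, hence remains $\equiv 1$.

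There is no substantive obstacle here: the statement is a bookkeeping corollary of Theorem \ref{glob_bifo_mfg}. The only conceptual point worth flagging is the role of the bounded truncation $g$: the model of genuine interest has the linear coupling $V_1 = -a m_1$, which is unbounded and lies outside \eqref{Vass}; the bounded modification $g$ restores admissibility, while the normalization $g'(1) = 1$ guarantees that the linearization at the trivial solution — on which the whole bifurcation argument rests — agrees with the one obtained from the untruncated linear cost. The subsequent identification of these branches with branches of the untruncated problem requires a uniform $L^\infty$-bound on $m_1$, but that bound belongs to a separate discussion and is not needed for the statement above.
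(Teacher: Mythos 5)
Your proposal is correct and coincides with the paper's own (implicit) argument: the paper presents Theorem \ref{glob_bifo_aggreg} precisely as a rephrasing of Theorem \ref{glob_bifo_mfg} under the substitutions $JV = \mathrm{diag}(-a,0)$, $\Xi = I$, $\lambda_1 = \pi^2$, $\lambda_2 = 4\pi^2$, $\psi_1(x)=\cos(\pi x)$, and your algebra converting \eqref{TC} and \eqref{expa} into the stated formulas is accurate. Your observations on the role of the bounded truncation $g$ (needed for \eqref{Vass}, with $g'(1)=1$ fixing the linearization) and the exact decoupling $(u_2,m_2)\equiv(0,1)$ are both correct and match the paper's discussion.
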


Let us comment on the previous result, in particular on the expansion \eqref{para}. The theorem provides the existence, for $T$ close to $T^*_n$ of a solution to \eqref{MFG0} such that $m$ can be written as in \eqref{para}: qualitatively, $m_1$ is a perturbation of the constant distribution $\bar m \equiv 1$ that is approximately of the form $ \cos(\pi x) \sin \left(2\pi s/\tau\right) $, where the oscillation period (in time) is $\tau = 2 (a- (\sigma \pi)^2)^{-1/2}$. Let us focus on the time dependance: the final time $T = T^*_n + o(\eps)$ has the form
\[
T = \frac{\tau}{2}(n- \delta) + o(\eps),
\]
where $\delta < 1/2$ is some constant depending on the data. Therefore, if $n=1$, $s \mapsto  \sin \left(2\pi s/\tau\right)$ reaches a maximum and decreases until $s = T^*_1$.  If $n=2$, $s \mapsto  \sin \left(2\pi s/\tau\right)$ reaches a maximum, decreases to a minimum and increases again up to $s = T^*_2$. In general, $s \mapsto  \sin \left(2\pi s/\tau\right)$ switches $n-1$ times between maxima and minima during the whole time interval $[0, T^*_n]$, see Figure \ref{sins}. This means that $m_1$ switches $n-1$ times between the profiles $1 + \eps \cos(\pi x)$ and $1 - \eps \cos(\pi x)$ as $s$ increases, or in other words, $m_1$ performs (a bit less than) $n/2$ full oscillations in time around the constant state.

\begin{figure}
\centering
\includegraphics[width=5cm]{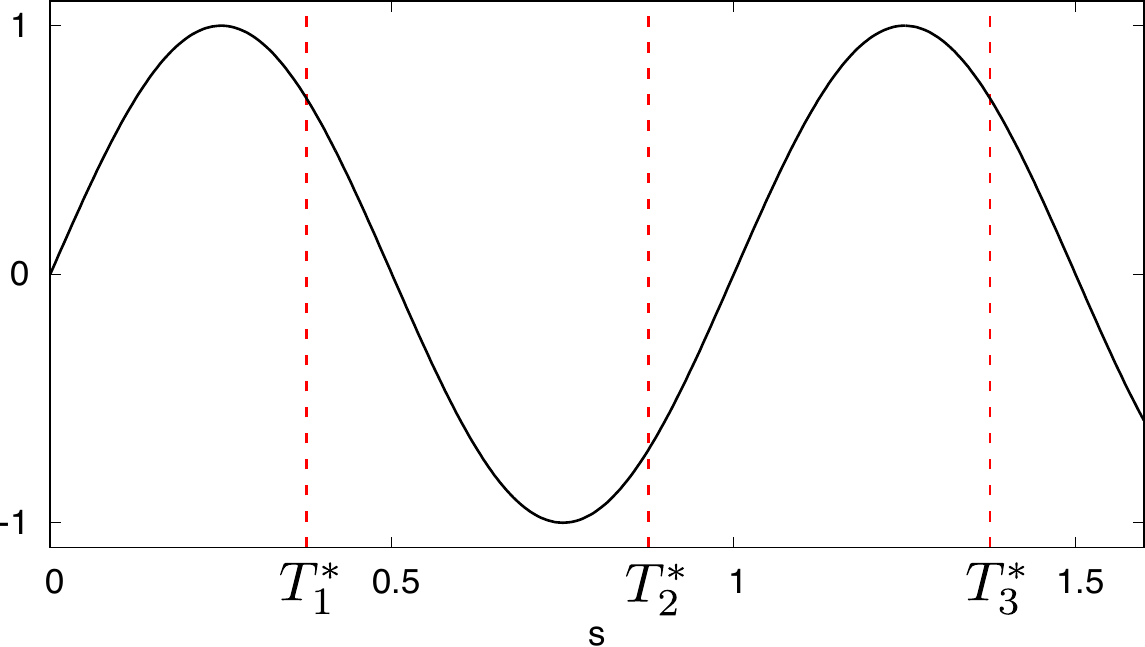}
\caption{\footnotesize Plot of the function $s \mapsto  \sin \left(2\pi s/\tau\right)$, $\tau = 1$, and bifurcation times $T^*_1, T^*_2, T^*_3$.}\label{sins}
\end{figure}

\smallskip

This can be clearly observed in the following numerical experiments. 

\noindent {\bf Experiment 1.} We choose the parameters
\[
\sigma = \frac{1}{\pi}, \qquad a = 2,
\]
so that the assumptions of Theorem \ref{glob_bifo_aggreg} are satisfied. We obtain several families of numerical solutions in the following way: for any fixed $n \in \mathbb{N}$, we expect the existence of a solution of the form $m(x, s) = \tilde m(x,s) + o(\eps)$, with $\tilde m(x,s) = 1 + \eps \cos(\pi x) \sin \left(\pi \sqrt{a- (\sigma \pi)^2} \, s\right)$, for $T$ close to $T^*_n$. We choose $\tilde m$ as the initial guess of the Newton method, with $\eps$ of order $10^{-1}$ (or $10^{-2}$), and increase $T$ starting from $T^*_n$ until the numerical method converges to a non-trivial solution. We then ``follow'' the branch by continuation with respect to the parameter $T$, as in Figure \ref{fig_2branches}.

\begin{figure}
\centering
\includegraphics[width=6cm]{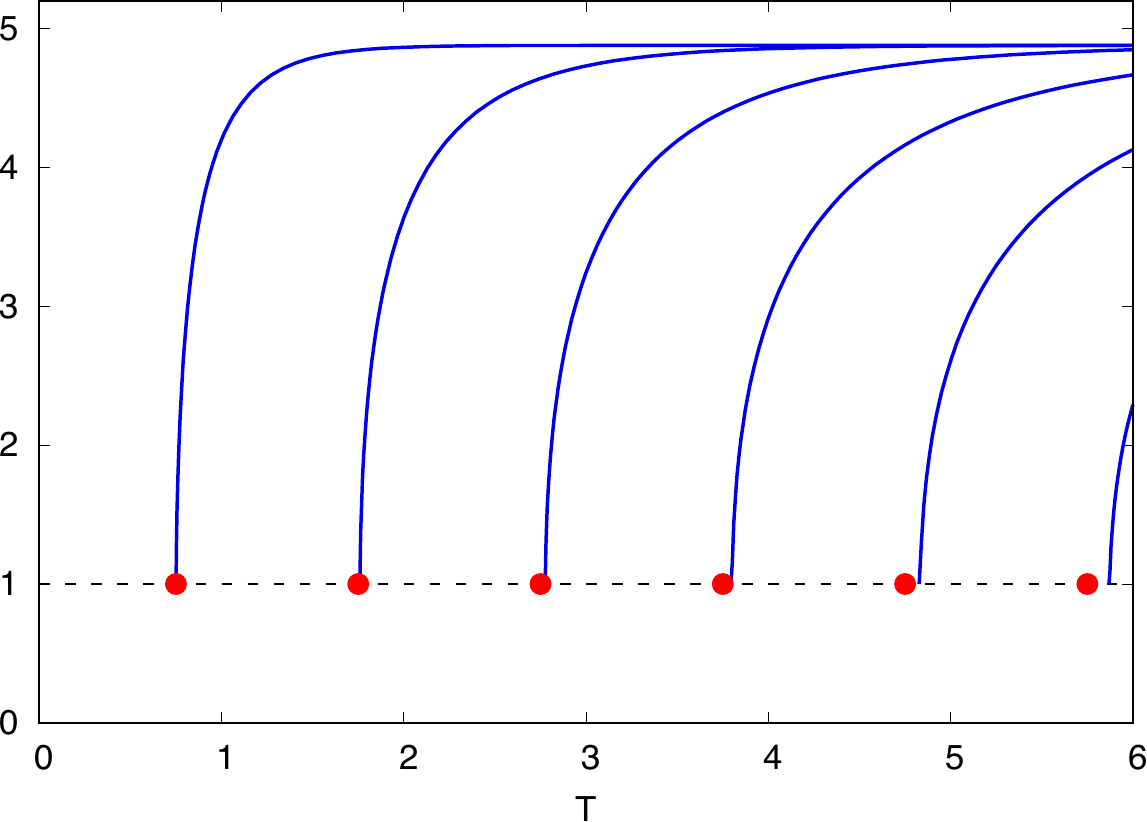}
\caption{\footnotesize Branches of non-trivial solutions, corresponding to $C_1 \ldots, C_6$. On the $y$-axis the value of $\|m\|_{\infty}$ is plotted. Red circles are the expected bifurcation points from the trivial solution $\bar m \equiv 1$.}\label{fig_2branches}
\end{figure}

Typically, the $\sup$-norm of $m_1$ is an increasing function of $T$. On one hand, the parameter $T$ can be decreased up to some $\overline T_n$ such that the corresponding solution $m_1$ converges uniformly to the trivial state; as $T$ approaches $\overline T_n$, one observes the expected qualitative behaviour in terms of space-time oscillations, in accordance with \eqref{para}. See Figure \ref{fig_2b}. In principle, $\overline T_n$ should almost coincide with the theoretical bifurcation times $T^*_n$, but those values are slightly different in general (see again Figure \ref{fig_2branches}). We believe that this can be explained as follows: $T^*_n$ are such that the linearized system \eqref{diff_zero} has a non-trivial solution, and this happens whenever the second order (in time) system \eqref{sys9} is satisfied. Note that \eqref{sys9} involves also the (space) eigenvalue $\lambda_k$. It is known that finite difference eigenvalues of the Laplacian do not coincide with eigenvalues of its continuous counterpart, though the former converge to the latter as the mesh step decreases. Therefore, bifurcation times of the discretised problem can differ from the continuous one, because of the gap between discrete and continuous eigenvalues, that is what we observe in our numerical results. However, as space and time steps decrease, we note that $\overline T_n$ indeed converge to the theoretical values  $T^*_n$.

\begin{figure}
\centering
\includegraphics[width=4.5cm]{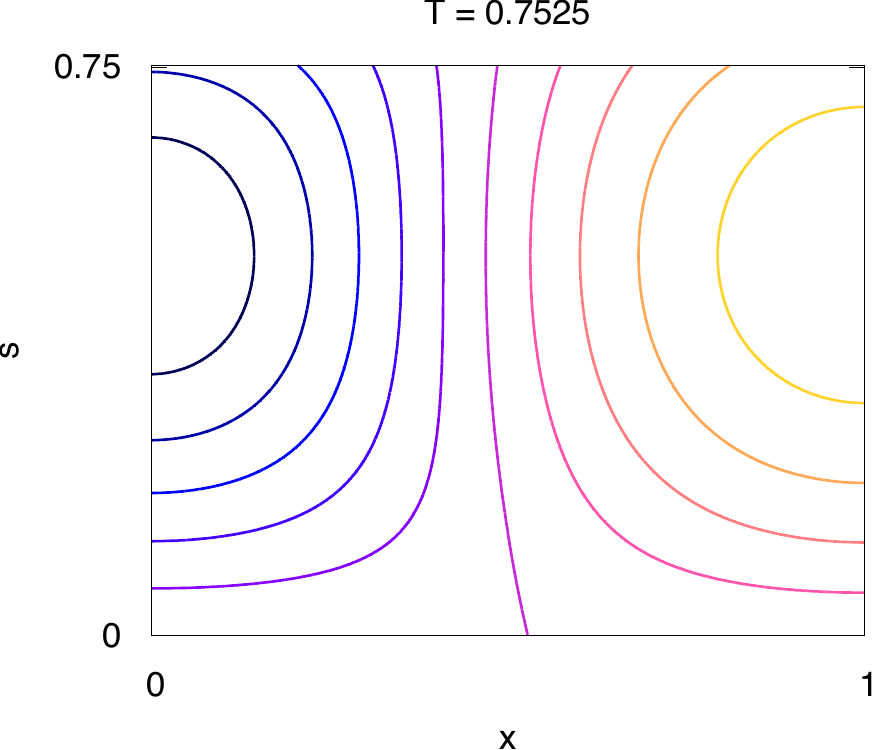}
\includegraphics[width=4.5cm]{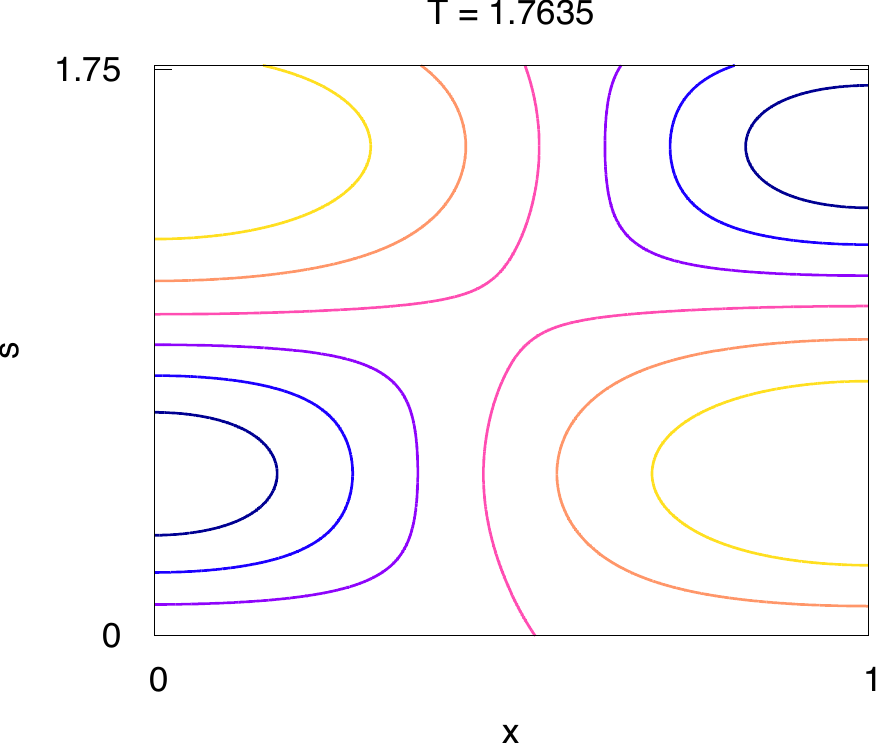}
\includegraphics[width=4.5cm]{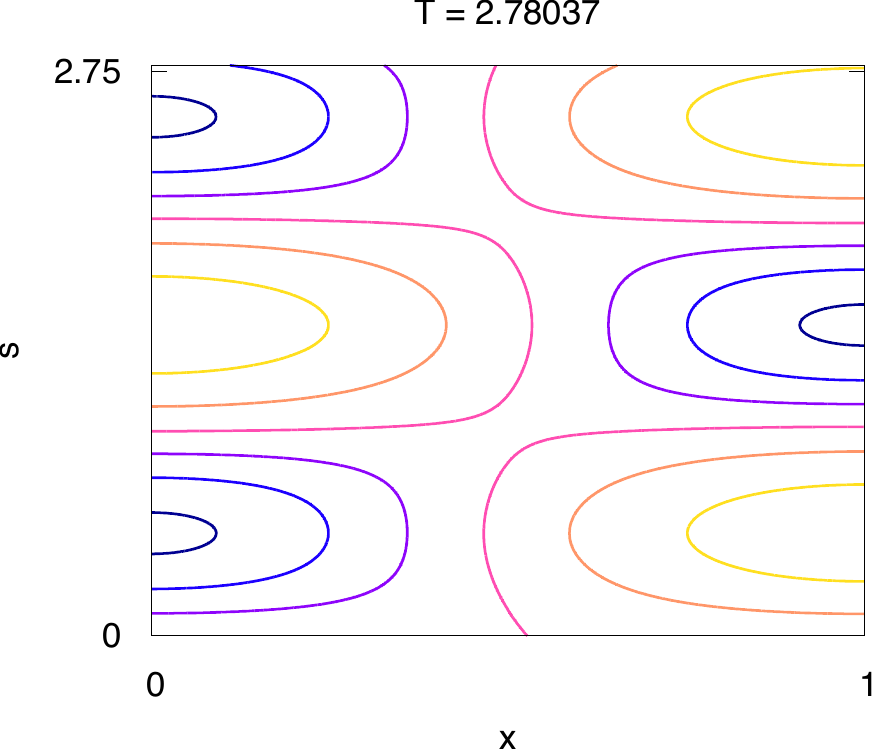}
\includegraphics[width=4.5cm]{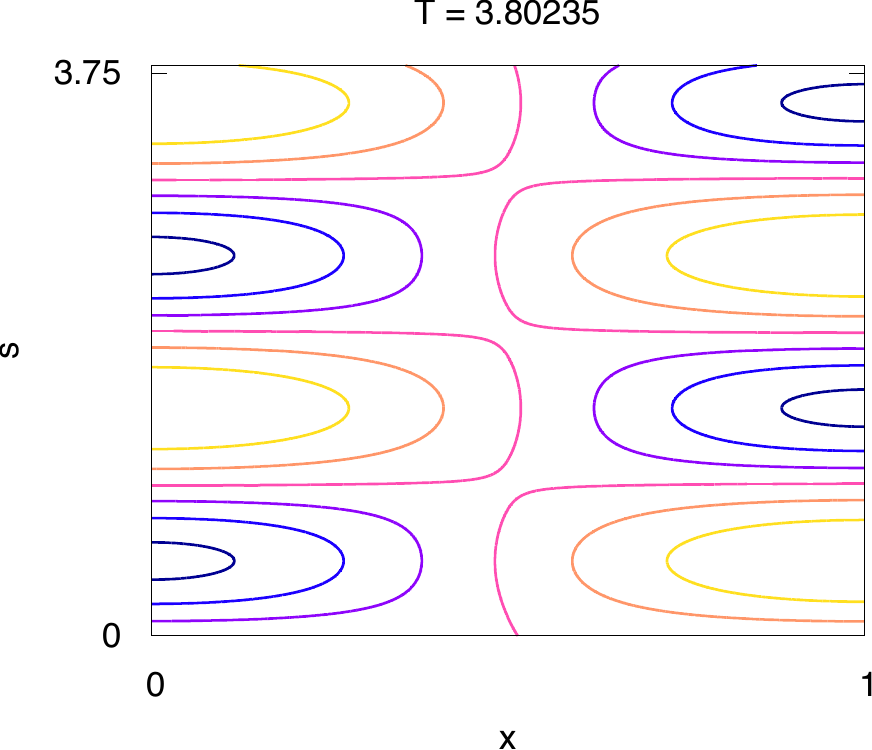}
\includegraphics[width=4.5cm]{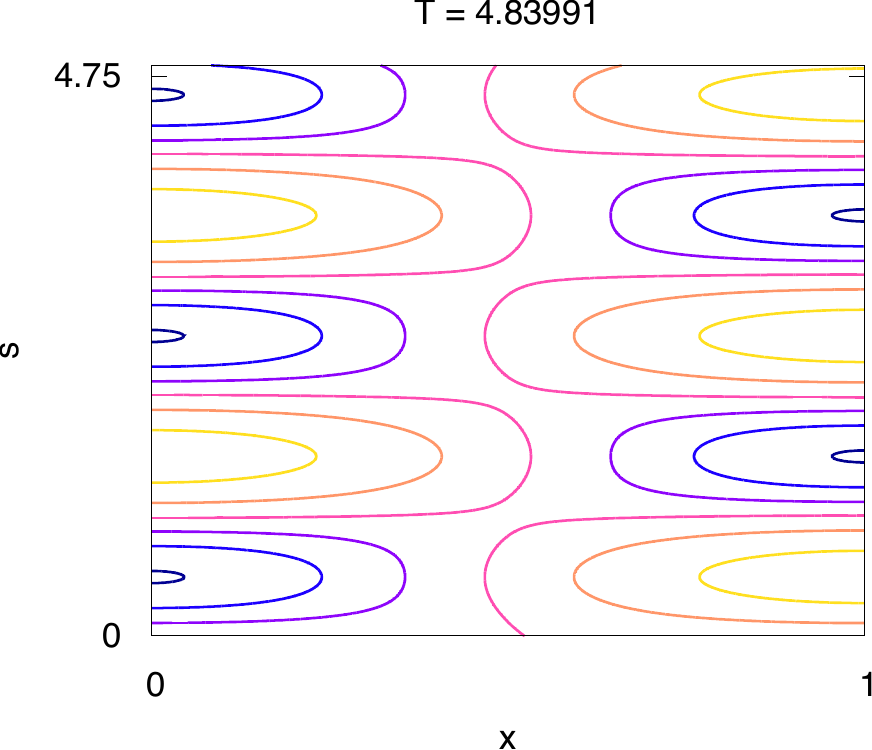}
\includegraphics[width=4.5cm]{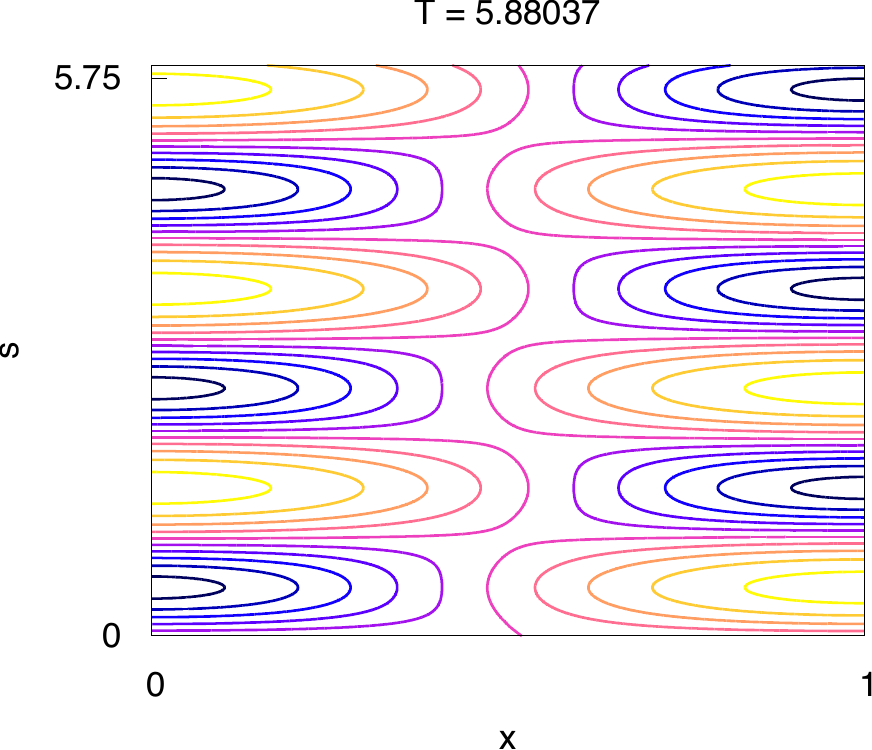}

\caption{\footnotesize From top-left to bottom-right, (space-time) contours of $m_1$ belonging to $C_1 \ldots, C_6$ respectively, for $T$ close to the bifurcation times $T_n^*$. Colours black-blue-violet-yellow vary from $\max_{Q_T} m_1$ to $\min_{Q_T} m_1$.}\label{fig_2b}
\end{figure}

\begin{figure}
\centering
\includegraphics[width=4.5cm]{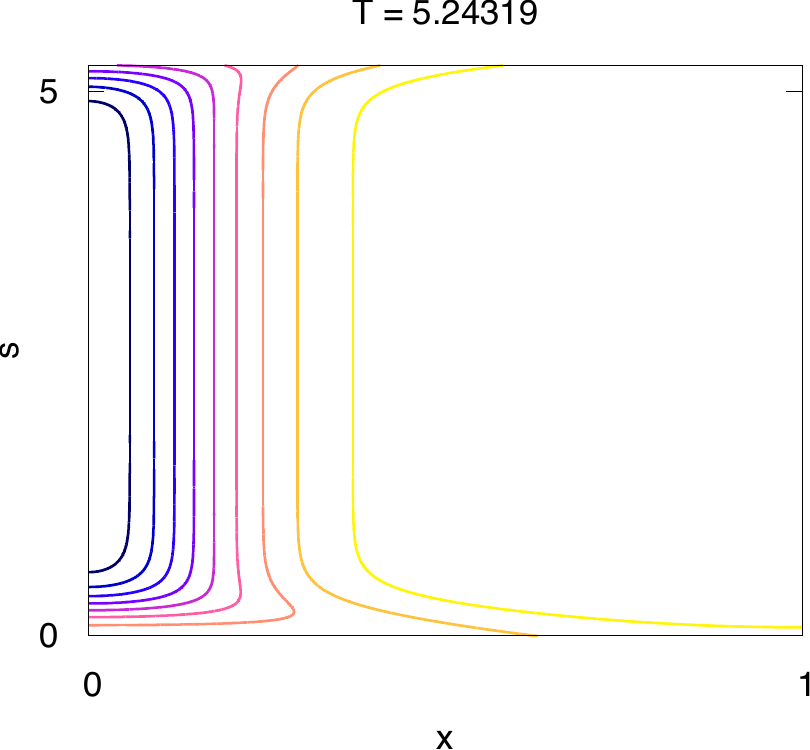}
\includegraphics[width=4.5cm]{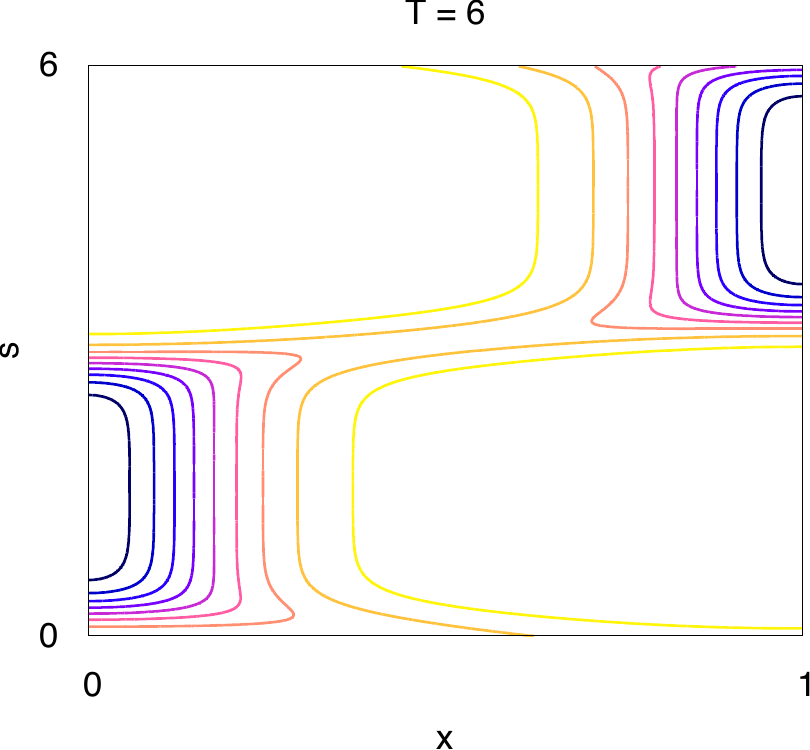}
\includegraphics[width=4.5cm]{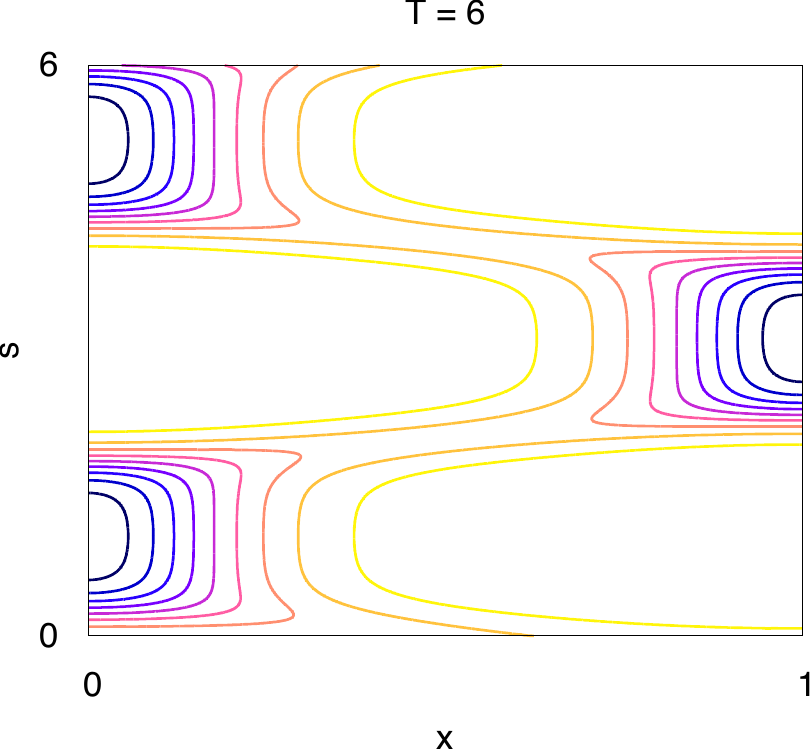}

\caption{\footnotesize From left to right, (space-time) contours of $m_1$ belonging to $C_1 \ldots, C_3$ respectively, for large values of $T$. Colours black-blue-violet-yellow vary from $\max_{Q_T} m_1$ to $\min_{Q_T} m_1$.}\label{fig_2f}
\end{figure}

On the other hand, $T$ can be increased arbitrarily. We are not able to carry out a qualitative theoretical analysis on the behaviour of solutions far from the bifurcation points, but we observe the following phenomenon in our numerical results. Along the first branch $C_1$, $m_1$ evolves quickly into a stable state, until the final horizon $T$ (here $\max_{[0,1]} m_1(\cdot, s)$ slightly decreases as $s$ approaches $T$ because the value function $u$ has to achieve the identically zero final cost). This asymptotic behaviour can be also observed in Figure \ref{fig_2branches}, as $\|m_1\|_{L^\infty(Q_T)}$ reaches a constant in time value as $T$ increases. The second branch $C_2$, for $T$ large, consists of solutions that are stable up to time (approximately) $T/2$, and switch quickly to another stable state up to final time. Similarly, in $C_3$ solutions switch two times, the third stable state being identical to the first one. As $T$ increases, $m_1$ becomes substantially stable in time, but shows $n-1$ rapid changes of state; apparently, the number of oscillations in time does not vary on $C_n$. We believe that such stable states are solutions of the stationary MFG
\[
\begin{cases}
 - \sigma \Delta u_i + \frac{1}{2}|\nabla u_i|^2 + \overline H_i = V_i(m_1(x), m_2(x)),& \text{in $\Omega$, i =1,2,} \\
 - \sigma \Delta m_i - \diverg(\nabla u_i\, m_i)  = 0, \quad \int_{\Omega} m_i = 1.
\end{cases}
\]

\smallskip \noindent {\bf Experiment 2.}\label{exp2} We now choose the parameters
\[
\sigma = \frac{1}{\pi}, \qquad a = 5.
\]
Note that $\pi^2 = \lambda_1 < 4 \pi^2 = \lambda_2 < a < 9 \pi^2 = \lambda_3$; therefore Theorem \ref{glob_bifo_aggreg} does not apply directly, but we may argue as in Remark \ref{addrem1}. Indeed, $\lambda_1$ and $\lambda_2$ are clearly simple eigenvalues. Therefore, we expect the existence of two families of non-trivial solutions branching off from
\[
\begin{split}
&T^*_{n,1} = \frac{n}{2} - \frac{1}{2\pi} \arctan(2), \\
&T^*_{n, 2} = \frac{n}{2} - \frac{1}{2\pi} \arctan\left(\frac{1}{2}\right),
\end{split}
\]
since $T^*_{n,1} \neq T^*_{m,2}$ for all $n, m$. Close to $T^*_{n,k}$ we expect to find solutions of the form \[m(x,s) \approx 1 + \eps \cos(k \pi x) \sin \left(2 \pi s\right).\] We represent some of these families in Figures \ref{fig_5branches} and \ref{fig_5b}. Branches behave qualitatively as in Experiment 1: the space-time structure is preserved as $T$ varies and the long-time regime is analogous. Note that as $T$ approaches the bifurcation parameter, in order to continue the branch up the the bifurcation point one has to decrease $T$ very slowly; in other words, a small perturbation of $T$ induces on the corresponding solution a significant variation. This is particularly noticeable for large values of $a$ and for the first branches $T_{1,1}$, $T_{1,2}$, etc.

\begin{figure}
\centering
\includegraphics[width=7cm]{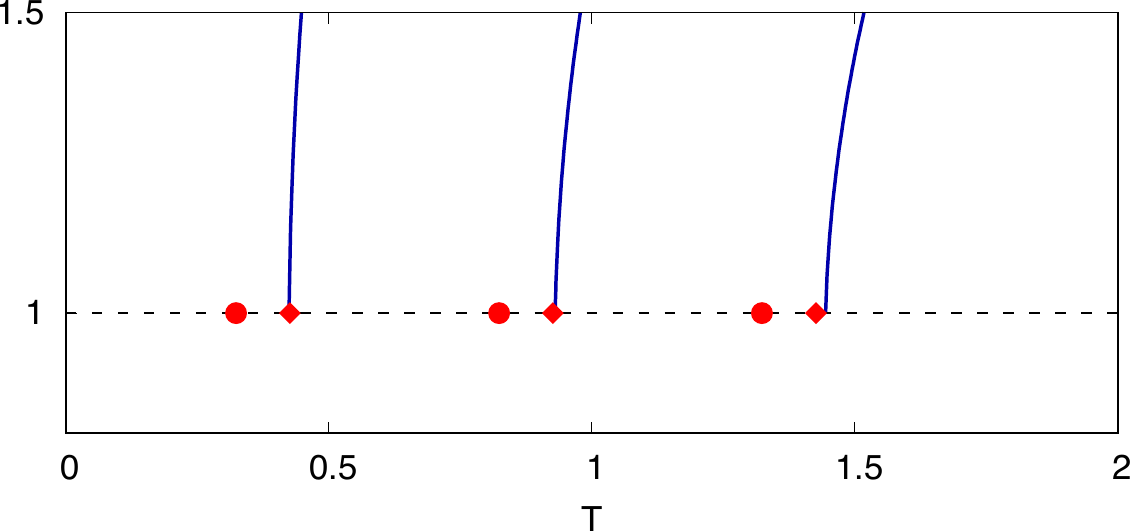}
\caption{\footnotesize Branches of non-trivial solutions, corresponding to $C_{1,2}, C_{2,2}, C_{3,2}$. On the $y$-axis the value of $\|m\|_{\infty}$ is plotted. Red circles and diamonds are the expected bifurcation points $T_{n,1}^*$ and $T_{n,2}^*$ respectively from the trivial solution $\bar m \equiv 1$.}\label{fig_5branches}
\end{figure}

\begin{figure}
\centering
\includegraphics[width=4.5cm]{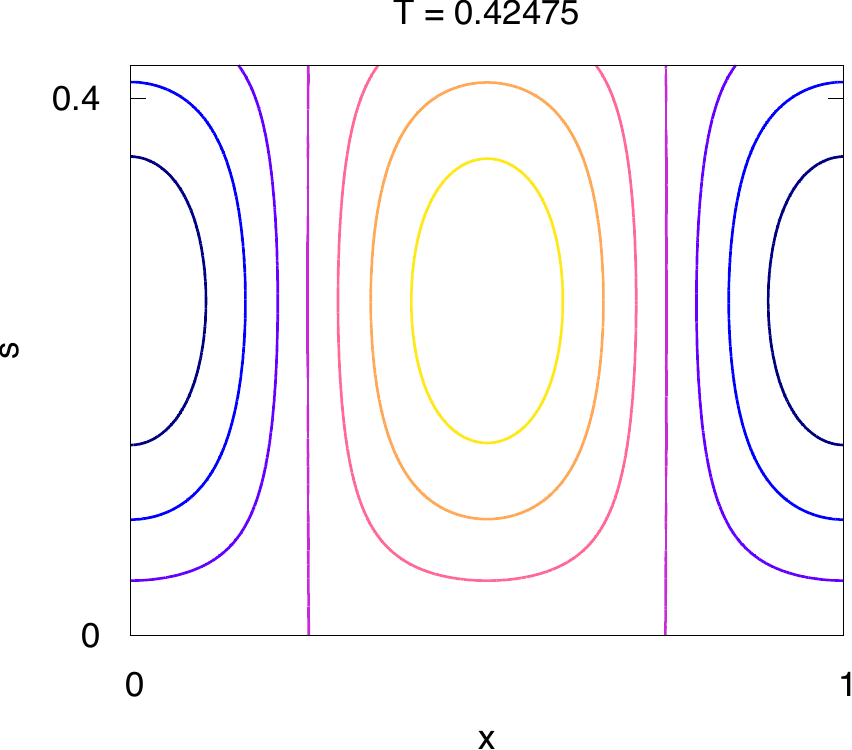}
\includegraphics[width=4.5cm]{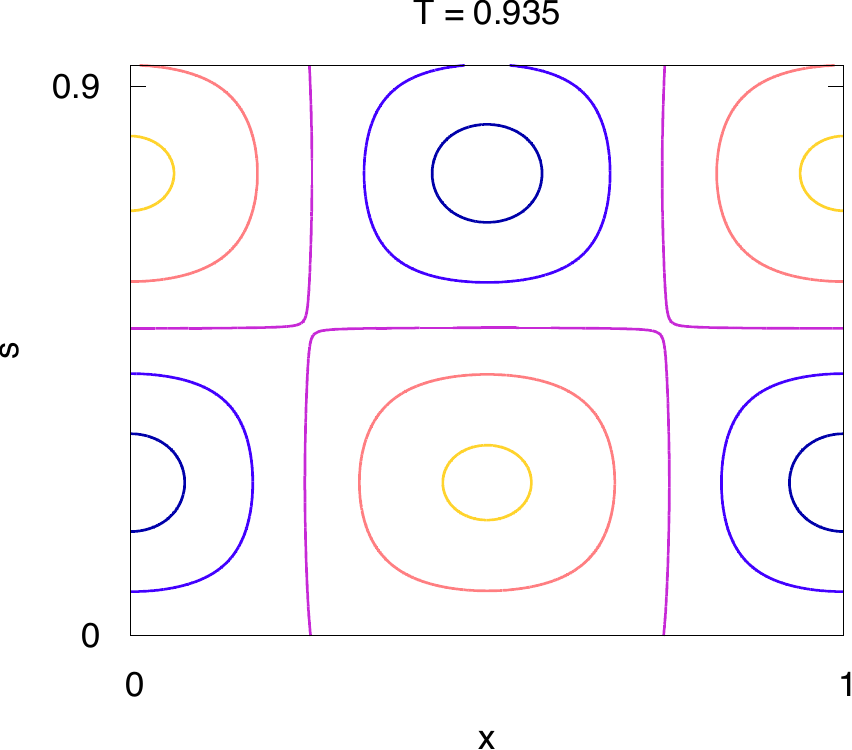}
\includegraphics[width=4.5cm]{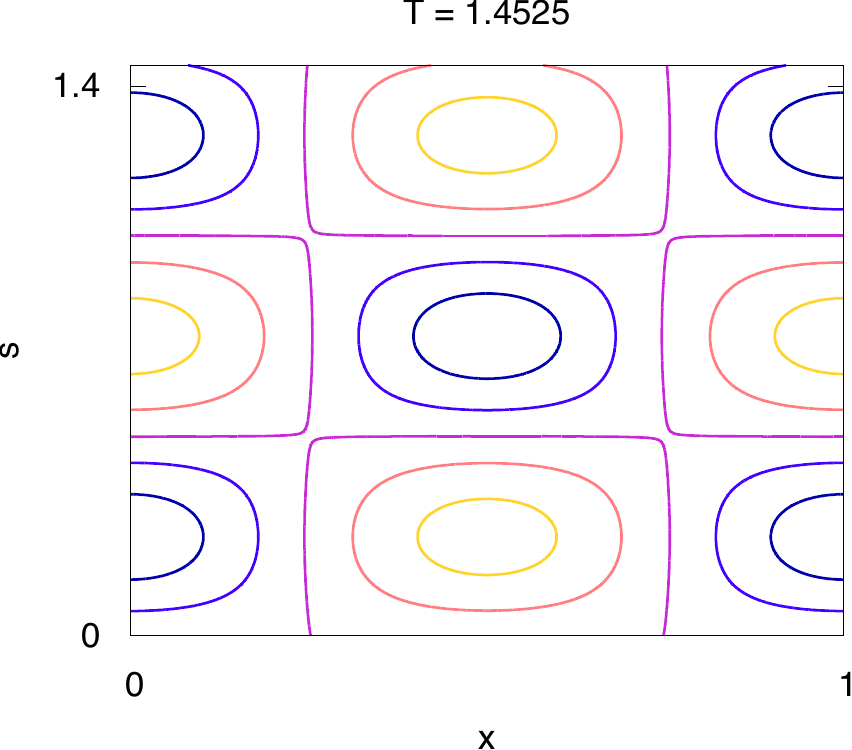}

\caption{\footnotesize From left to right, (space-time) contours of $m_1$ belonging to $C_{1,2}, C_{2,2}, C_{3,2}$ respectively, close to bifurcation times $T^*_{n, k}$. Colours black-blue-violet-yellow vary from $\max_{Q_T} m_1$ to $\min_{Q_T} m_1$.}\label{fig_5b}
\end{figure}

\smallskip \noindent {\bf Experiment 3.} We go back to the initial choice of the parameters
\[
\sigma = \frac{1}{\pi}, \qquad a = 2,
\]
but consider a MFG system with non-quadratic Hamiltonian, i.e. we discretize \eqref{MFGnq} with $H_i$ of the form
\[
H_i(p) = \frac{1}{2} |p|^\gamma, \qquad \gamma > 1.
\]
Note that $H_i$ is not even $C^2$ in a neighborhood of zero if $\gamma \in (0,2)$, so the first variation of  \eqref{MFGnq} contains a singular part. On the other hand, if $\gamma > 2$ the linearization of \eqref{MFGnq} is well-defined, but $D^2 H_i(0) =: \kappa = 0$. Therefore, arguing as in Remark \ref{addrem2}, the linearized system has no non-trivial solutions for all $T$, so we do not expect the existence of branches bifurcating from the trivial solution. Still, if $\gamma$ is close to two, \eqref{MFGnq} can be regarded as a small perturbation of the quadratic problem \eqref{MFG1}, and one might expect to find some analogies between the two cases.

The strategy to obtain non-trivial solutions is as follows: we start the Newton method using solutions of the quadratic problem as initial guesses. This method is particularly efficient if $T$ is not too close to the bifurcation values of the quadratic problem. Once a (discrete) solution is found, we continue along the parameter $T$. The sub-quadratic ($\gamma < 2$) and super-quadratic ($\gamma > 2$) regimes show dramatically different behaviours, that are represented in Figure \ref{nonqbranches}.

If $\gamma = 1.9$, for large values of $T$ solutions are somehow close to solutions of the quadratic problem. As $T$ decreases, branches do not collapse at some $T^*$ to the trivial state, but seem to continue up to $T = 0$. Actually, this continuation is hard to be carried over to arbitrarily small time horizons, as $m$ goes to the trivial state very quickly in the $\sup$-norm. Still, $m_1$ remains numerically bounded away from one even for $T$ very small. This property is shared between all branches, indicating that there might be a clustering of an infinite number of branches as $T \to 0$ to the trivial solution.

If $\gamma = 2.1$, a branch of stable (in time) solutions behaving as solutions of the quadratic problem is found for large values of $T$. As $T$ decreases, this branch reaches a turning point and continues back towards the direction $T \to \infty$. In this direction, $m_T$ converges to the trivial solution as $T$ goes to infinity, but remains non-trivial for all $T$. In this case, branches cluster around the trivial solution as $T \to \infty$.

Recall that if $H_i \in C^2(\RsetN)$ one has uniqueness of solutions for $T$ small, by reasoning as in Lemma \ref{unique}. This is consistent with numerical results for the case $\gamma > 2$. When $\gamma < 2$, so $H_i \notin C^2(\RsetN)$, numerical simulations suggest that uniqueness for small $T$ may fail.

\begin{figure}
\centering
\includegraphics[width=6cm]{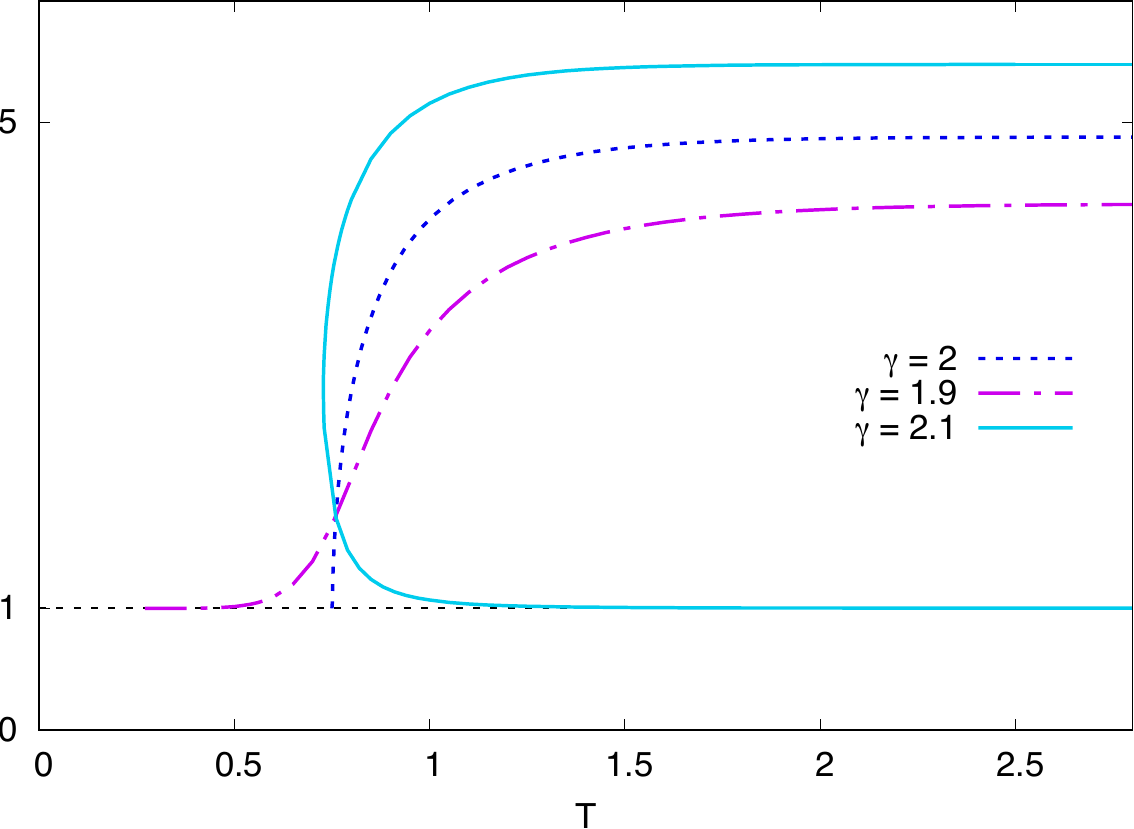}
\caption{\footnotesize Comparison between branches obtained with different values of $\gamma$.}\label{nonqbranches}
\end{figure}

\subsection{A two-populations model}\label{spb}

We consider now a truly multi-population model, introduced in \cite{ABC}. Inspired by the pioneering work of T. Schelling, the authors develop in the MFG framework a simple model of residential choice. In this model, preferences of each player are described by couplings $V_i$ of the following form:
\[
V_1(m_1, m_2) = K_1 \left(\frac{m_1}{m_1 + m_2} - \alpha_1 \right)^-, \qquad V_2(m_1, m_2) = K_2 \left(\frac{m_2}{m_1 + m_2} - \alpha_2 \right)^-,
\]
where $(\cdot)^-$ denotes the usual negative part function, $K_i > 0$, and $\alpha_i \in [0,1]$. In particular, $\alpha_i$ represents the minimum percentage of players of the $i$-th population among the total amount of players that is required for an agent of the $i$-th population to be satisfied. In other words, if the ratio between $m_1(x)$ and $m_1(x) + m_2(x)$ is above the threshold $\alpha_1$, players of the first population at position $x$ pay a null cost; otherwise they pay a positive cost, and might be tempted to move to another spot (the scenario is identical for players of the second population). We refer to \cite{ABC} for additional details regarding this model. What we aim to show here is the existence of solutions with an instable-oscillatory behaviour, that has been pointed out numerically in the aforementioned work when thresholds $a_i$ are larger than $1/2$. 

Before presenting numerical experiments, let us set this model into the bifurcation framework developed in Section \ref{s_bif}. We first note that the negative part function is not $C^1$ in a neighborhood of zero, but it is sufficient to replace it by a regularized version that coincides with $(\cdot)^-$ in $\R \setminus (-\eta, \eta)$, with $\eta > 0$ very small. We may now proceed to compute $JV$, and to evaluate it at $(1,1)$. Since
\[
\begin{split}
& (\partial_{m_1} V_1(m_1, m_2), \partial_{m_2} V_1(m_1, m_2)) = 
\begin{cases}
K_1\left(-\frac{m_{2}}{(m_1 + m_2)^2}, \frac{m_1}{(m_1 + m_2)^2} \right) & \text{if $m_1(1-\alpha_1) - \alpha_1 m_{2} < 0$} \\
(0, 0) & \text{if $m_1(1-\alpha_1) - \alpha_1 m_{2} > 0$}
\end{cases}\\
& (\partial_{m_1} V_2(m_1, m_2), \partial_{m_2} V_2(m_1, m_2)) = 
\begin{cases}
K_2\left(\frac{m_2}{(m_1 + m_2)^2}, -\frac{m_{1}}{(m_1 + m_2)^2} \right) & \text{if $m_2(1-\alpha_2) - \alpha_2 m_{1} < 0$} \\
(0, 0) & \text{if $m_2(1-\alpha_2) - \alpha_2 m_{1} > 0$}
\end{cases}
\end{split}
\]
we basically distinguish three cases:

 {\bf 1).} Both $\alpha_1, \alpha_2 < 1/2$, so $JV(1,1) = 0$. In this regime, the bifurcation results do not apply, because the linearized system does not have non-trivial solutions; we then expect the trivial solution to be isolated for all $T$. This is reasonable as both $V_1(\bar m)$ and $V_2(\bar m)$ are identically zero, so both the populations are completely satisfied in the constant-trivial state; any variation or movement would increase the cost.

{\bf 2).} Both $\alpha_1, \alpha_2 > 1/2$, so 
\[
\begin{bmatrix}
-\frac{K_1+K_2}{4} &  0 \\
  0 & 0
\end{bmatrix}
= \Xi \cdot JV \cdot \Xi^{-1}, \qquad
\Xi = \begin{bmatrix}
K_1 &  1 \\
 -K_2 &  1
\end{bmatrix},
\]
and Theorem \ref{glob_bifo_mfg} applies whenever $\sigma^2 \lambda_1 < \frac{K_1+K_2}{4} < \sigma^2 \lambda_2$ (see also Remark \ref{addrem1} for larger values of $K_1 + K_2$). Branches of non-trivial solutions exist and originate at $T^*_n$ given by \eqref{TC}; note that, close to bifurcation points, we have the representation
\[
(m_1(x,s), m_2(x,s)) = (1 + \eps K_1 \psi_1(x) \sin \left(2 \pi s/\tau \right), 1 - \eps K_2 \psi_1(x) \sin \left(2 \pi s/\tau \right))  + o(\eps)
\]
where $\tau = 2\pi \left(\lambda_{1}((K_1+K_2)/4 - \sigma^2 \lambda_{1})\right)^{-1/2}$. 

The structure of non-trivial branches is somehow similar to the single population model with aggregation discussed in Section \ref{spa}. This is evident close to bifurcation points, since the multi-population linarized system can be decoupled via $\Xi$ in two systems: the first one coincides with the one that is obtained by linearizing the single population case with aggregation, while the second one never has non-trivial solutions, so it does not generate bifurcation points. The oscillating behaviour in the Schelling's model is then analogous to the one that is observed in Section \ref{spa}, see Experiment 4.

 {\bf 3).} $\alpha_1 > 1/2$ while $\alpha_2 < 1/2$, so the second population (more tolerant) is happy in the trivial equilibrium, while players of the first population pay a positive cost. Here, 
\[
\begin{bmatrix}
-\frac{K_1}{4} &  0 \\
  0 & 0
\end{bmatrix}
= \Xi \cdot JV \cdot \Xi^{-1}, \qquad
\Xi = \begin{bmatrix}
1 &  1 \\
0 &  1
\end{bmatrix},
\]
so Theorem \ref{glob_bifo_mfg} applies if $\sigma^2 \lambda_1 < K_1/4 < \sigma^2 \lambda_2$ (see also Remark \ref{addrem1} for larger values of $K_1$). Non-trivial solutions branch off from the trivial ones at some $T^*_n$, and close to bifurcation points we have the representation
\begin{equation}\label{locpar}
(m_1(x,s), m_2(x,s)) = (1 + \eps\psi_1(x) \sin \left(2\pi s/\tau \right), 1)  + o(\eps)
\end{equation}
where $\tau = 2\pi \left(\lambda_{1}(K_1/4 - \sigma^2 \lambda_{1})\right)^{-1/2}$. This regime is somehow more peculiar than the previous setting 2), where both populations are intolerant. In that case, close to bifurcation points we expect the first population to oscillate approximately between $1+  \eps \psi$ and $1- \eps \psi$, while the other one to oscillate between $1- \eps \psi$ and $1+\eps \psi$, trying to avoid each other. Here, small oscillations of the first population are not strong enough to induce the first population to leave the constant state, at least up to some critical $T$. On the other hand, if the perturbation from the trivial state of $m_1$ is significant, $m_2$ will oscillate itself to avoid $m_1$ and decrease its own cost. Far from bifurcation points, oscillations become significant for both populations. See Experiment 5 for additional considerations on this regime.

\smallskip \noindent {\bf Experiment 4.} The parameters are chosen as follows
\[
\sigma = \frac{1}{\pi}, \quad a_1 = 0.7, \quad a_2 = 0.55, \quad K_1 = 5, \quad K_2 = 3.
\]
Players of both populations prefer their spot to be occupied by players of their own population, but the first one is somehow more racist than the other. In Figure \ref{fig_sch} we observe the typical oscillating behaviour close to and far from bifurcation points. Populations try to avoid each other; while the first branch of equilibria consists of two profiles for $m_1$ and $m_2$ respectively that are stable in time, switching between different states arises in other branches; still, the number of switchings appears to be preserved in every branch. We observe that the population that is more tolerant is more ``spread'' on $[0,1]$, while the other one is more concentrated, i.e. its maximum is larger.

\begin{figure}
\centering
\includegraphics[width=4cm]{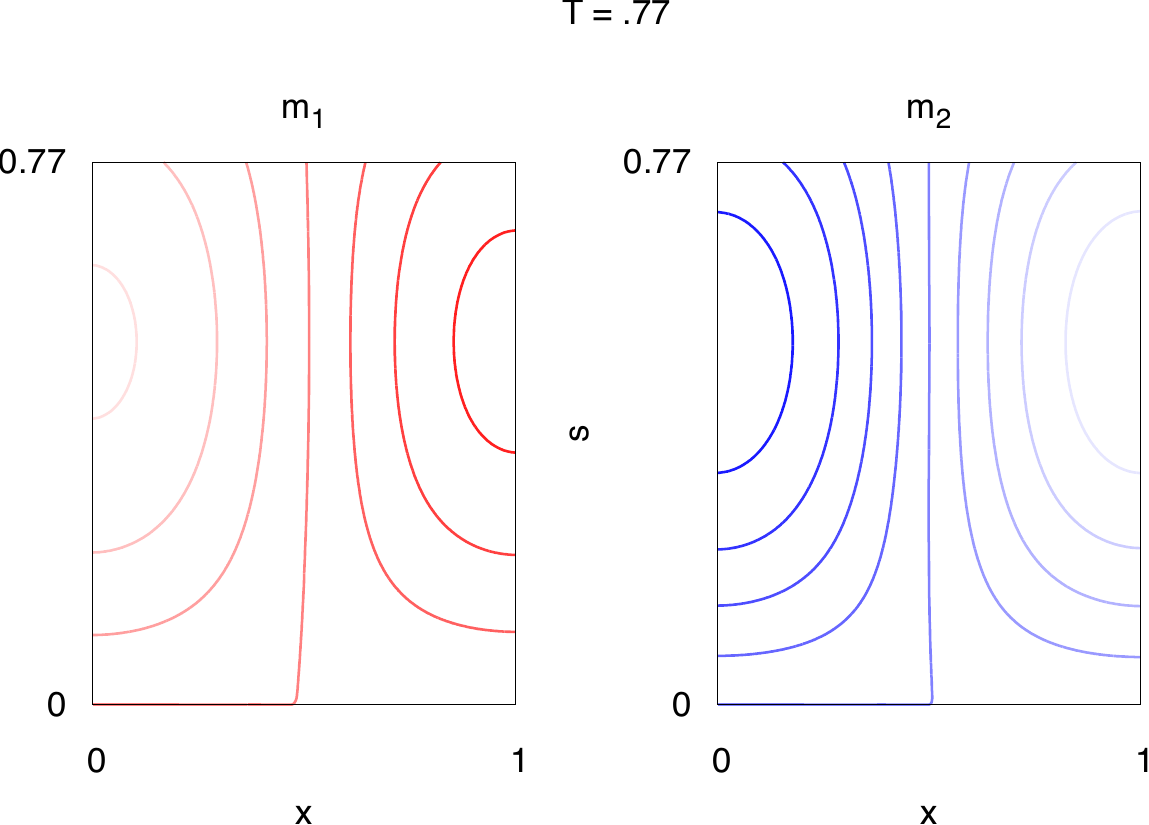}
\hspace{.5cm}
\includegraphics[width=4cm]{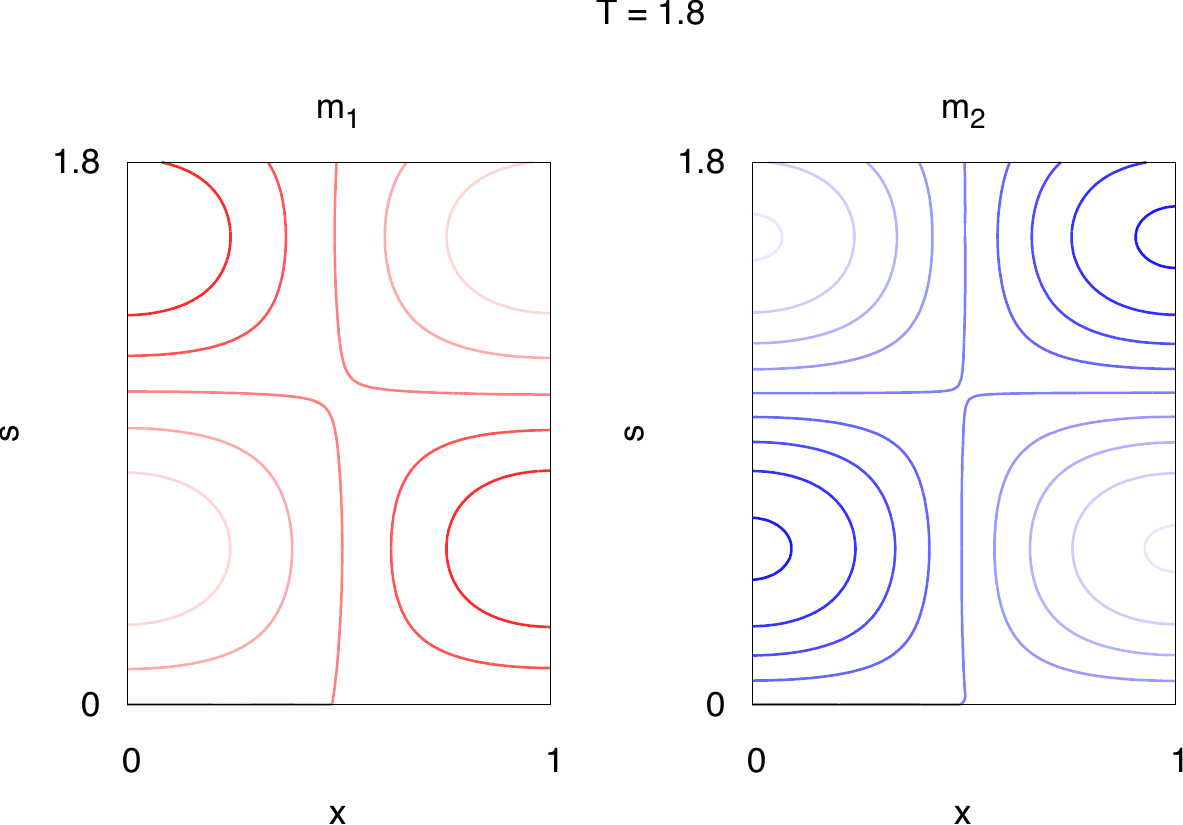}
\hspace{.5cm}
\includegraphics[width=4cm]{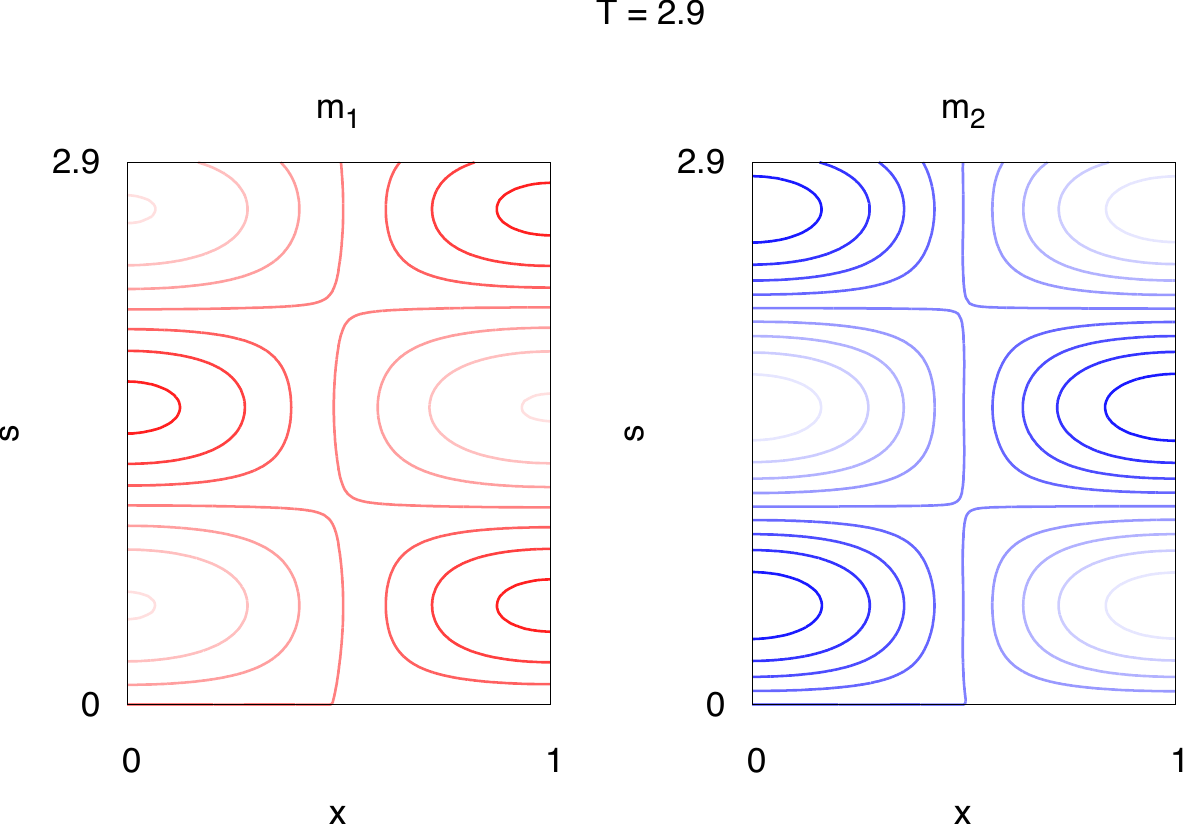}
\includegraphics[width=4cm]{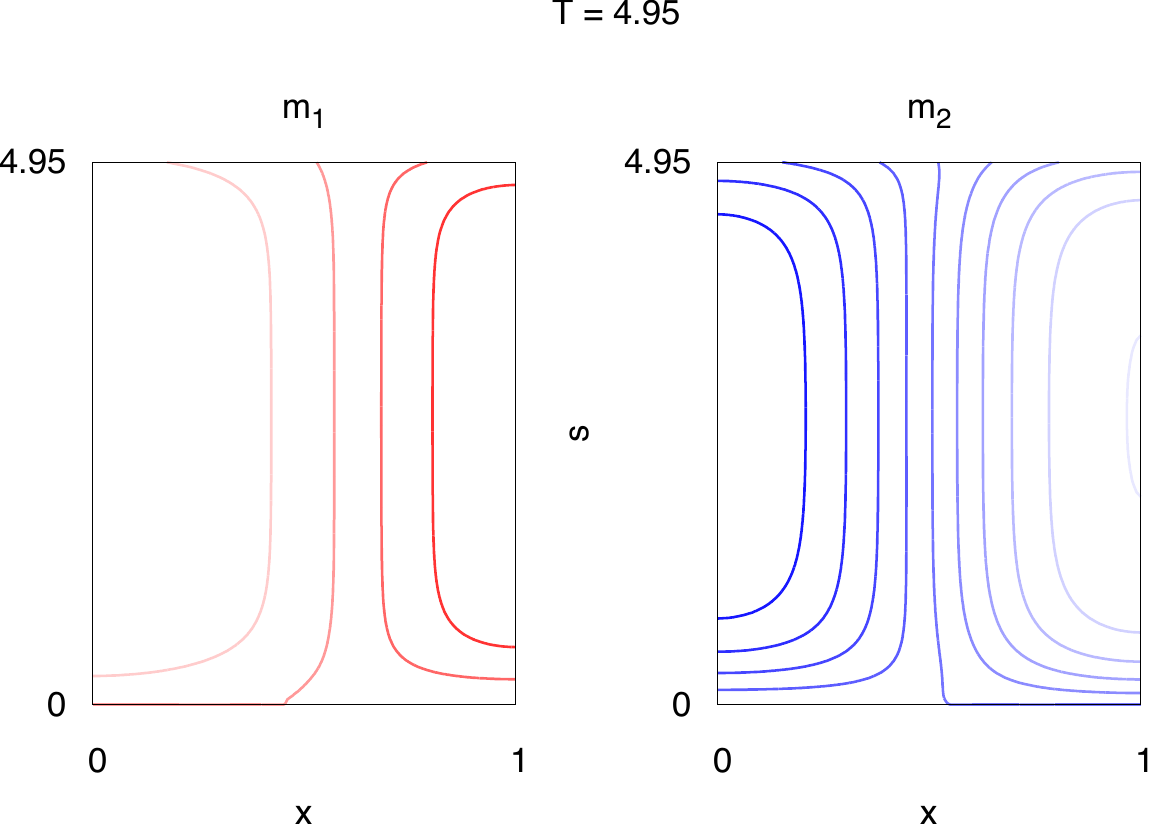}
\hspace{.5cm}
\includegraphics[width=4cm]{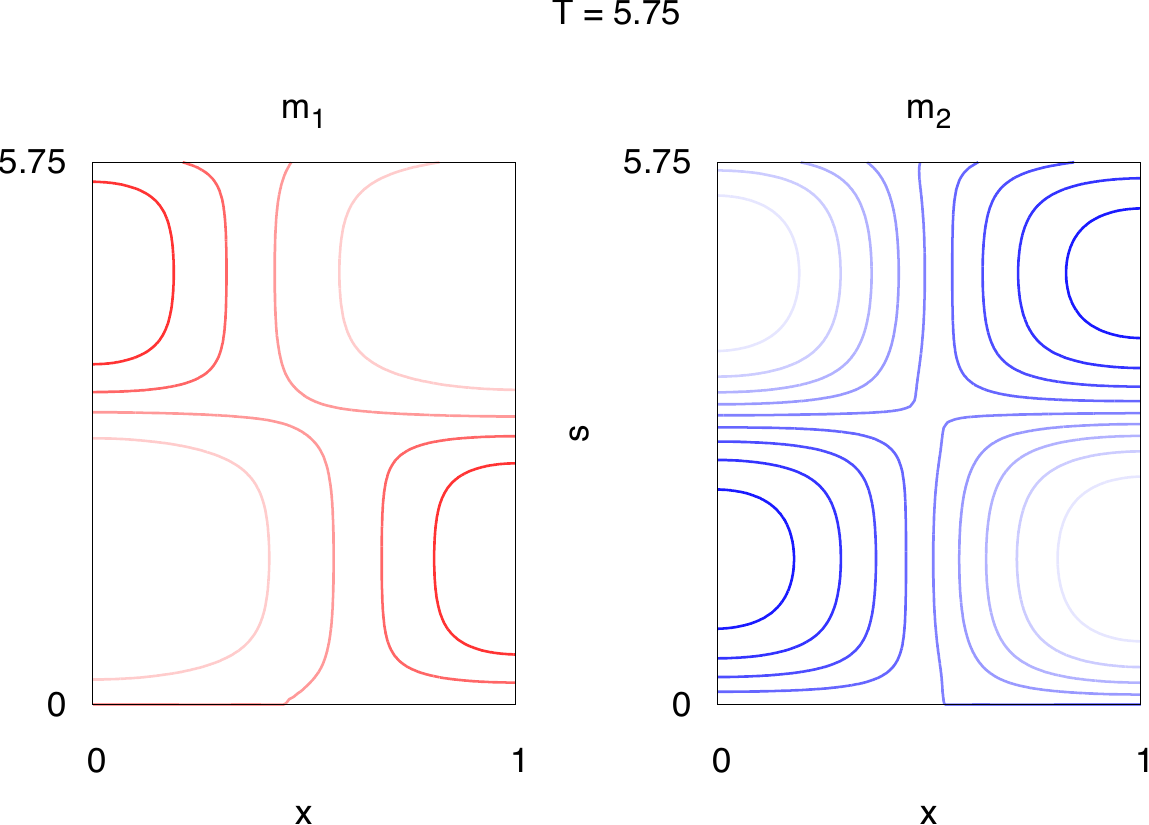}
\hspace{.5cm}
\includegraphics[width=4cm]{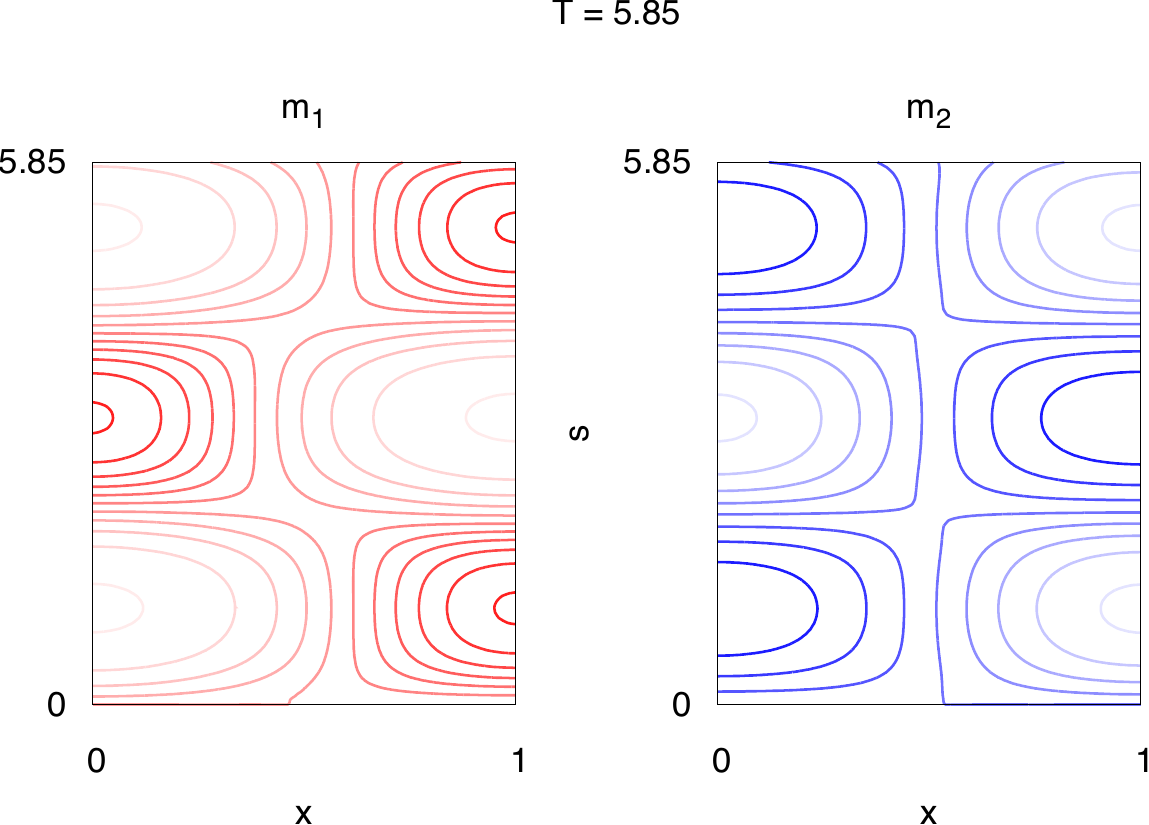}

\caption{\footnotesize Space-time contours of $m_1, m_2$ belonging to different branches. Top contours are for solutions close to bifurcation times, while bottom ones are taken for bigger $T$.}\label{fig_sch}
\end{figure}

\smallskip \noindent {\bf Experiment 5.} The parameters are chosen as follows
\[
\sigma = \frac{1}{\pi}, \quad a_1 = 0.8, \quad a_2 = 0.4, \quad K_1 = 8, \quad K_2 = 8.
\]
With respect to Experiment 4, we point out the following phenomenon. Recall that, close to bifurcation points, solutions are parametrized by \eqref{locpar}. Since $a_2 < 0.5$, if $\eps$ is small enough, $V_2(m_1(x,s), m_2(x,s))$ is identically zero; this implies that $(u_2, m_2)$ in \eqref{MFG0} must be the trivial couple $(0,1) $ on $Q_T$, and in turn, \eqref{locpar} becomes
\[
(m_1(x,s), m_2(x,s)) = (1 + \eps\psi_1(x) \sin \left(\omega s \right) + o(\eps), 1) 
\]
In other words, close to a bifurcation point $T^*$, it is not convenient for $m_2$ to leave the constant state. This is particularly evident in Figure \ref{fig_sch_br} (see the blue line). As soon as $T$ increases, $\|m_1 - 1\|_\infty$ increases, so  $V_2(m_1(x,s), m_2(x,s))$ becomes non-zero on $Q_T$. At this point, the behaviour of the branch changes abruptly: it reaches a turning point (vertical dashed line in Figure \ref{fig_sch_br}), after which $m_2$ becomes truly non-trivial. Following the branch, another turning point is reached, and the qualitative behaviour of solutions then mimic the one of Experiment 4.

\begin{figure}
\centering
\includegraphics[width=6cm]{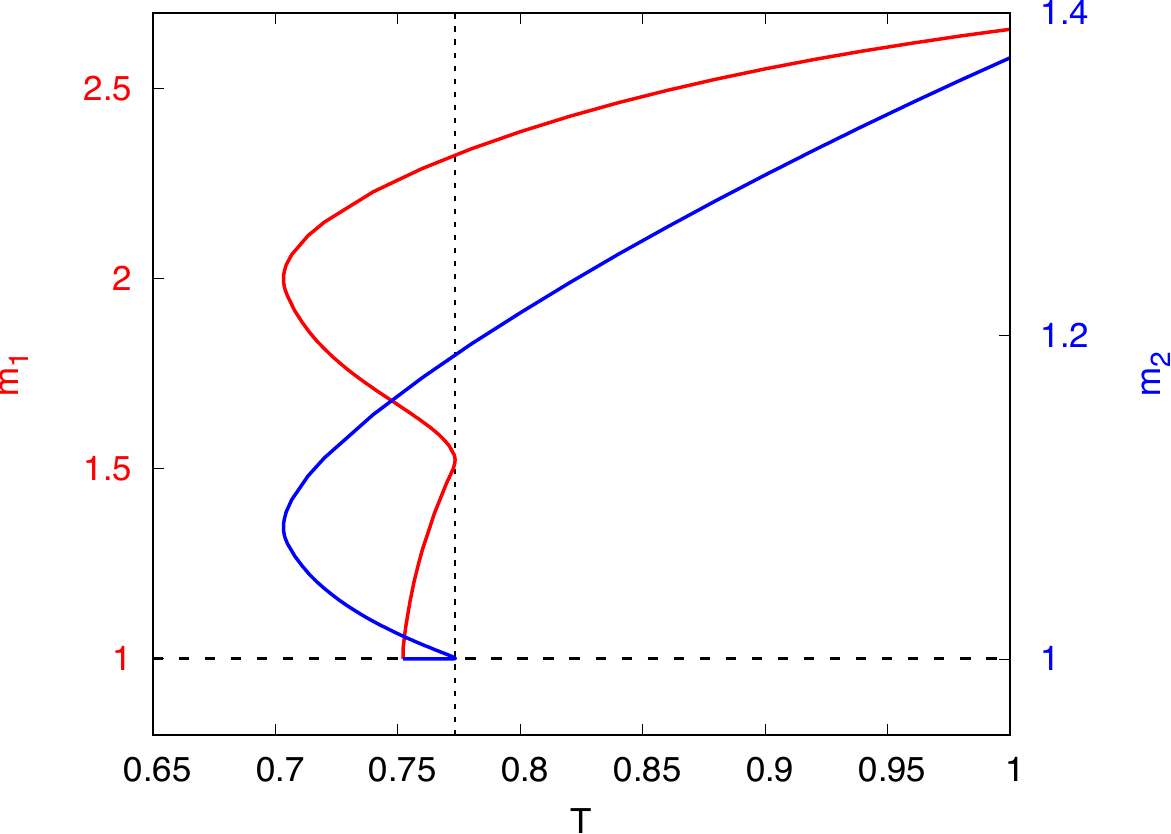}

\caption{\footnotesize The first branch of non-trivial solutions in Experiment 5. Red line is the maximum value $\|m_1\|_{\infty}$ vs. $T$, while the blue line represents $\|m_2\|_{\infty}$.} \label{fig_sch_br}
\end{figure}

% ------------------------------------------------------------

\small

\medskip
%\small
\begin{flushright}
\noindent \verb"cirant@math.unipd.it"\\
Dipartimento di Matematica ``Tullio Levi-Civita''\\ Universit\`a di Padova\\
via Trieste 63, 35121 Padova (Italy)
\end{flushright}

\end{document}